\theoremstyle{plain}
\newtheorem{thm}{Theorem}[section]
\newtheorem{lem}[thm]{Lemma}
\newtheorem{cor}[thm]{Corollary}
\newtheorem{prop}[thm]{Proposition}
\theoremstyle{definition}
\newtheorem{rem}[thm]{Remark}
\newtheorem{defn}[thm]{Definition}
\newcommand{\id}{\operatorname{id}}
\newcommand{\Spec}{\operatorname{Spec}}
\newcommand{\Hom}{\operatorname{Hom}}
\newcommand{\Inj}{\operatorname{Inj}}
\newcommand{\OO}{\mathcal{O}}
\newcommand{\ZZ}{\mathbb{Z}}
\newcommand{\RHom}{\mathbb{R}\!\Hom}
\newcommand{\sheafHom}{\mathcal{H}\!\operatorname{om}}
\newcommand{\Acy}{\operatorname{Acycl^{abs}}}
\newcommand{\MF}{\operatorname{MF}}
\newcommand{\Coh}{\operatorname{Coh}}
\newcommand{\Dabs}{\mathbf{D}^{\operatorname{abs}}\!}
\newcommand{\QC}{\operatorname{QCoh}}
\newcommand{\QCoh}{\operatorname{QCoh}}
\newcommand{\Cone}{\operatorname{Cone}}
\newcommand{\Int}{\operatorname{Int}}
\newcommand{\D}{\mathbf{D}\!}
\newcommand{\Mod}{\operatorname{Mod}}
\newcommand{\sheafRHom}{\mathbb{R}\mathcal{H}\!\operatorname{om}}
\newcommand{\e}{{\operatorname{e}}}
\newcommand{\sheafExtII}{\mathcal{E}\!\operatorname{xt}^{\operatorname{II}}}
\newcommand{\Tot}{\operatorname{Tot}}
\newcommand{\sheafHoch}{\mathcal{H}\!\operatorname{och}}
\newcommand{\disc}{\operatorname{disc}}
\newcommand{\RGamma}{\mathbb{R}\Gamma} 
\newcommand{\cont}{\operatorname{cont}}
\newcommand{\fX}{\mathfrak{X}}
\newcommand{\hatB}{\widehat{\mathcal{B}}}
\newcommand{\C}{\mathcal{C}}
\newcommand{\hatC}{\widehat{\mathcal{C}}}
\renewcommand{\c}{{\operatorname{c}}}
\newcommand{\MFdg}{\operatorname{MF}_{\operatorname{dg}}}
\newcommand{\mf}{\operatorname{mf}}
\newcommand{\MFCech}{\operatorname{MF}_{\operatorname{Cech}}}
\newcommand{\Perf}{\mathfrak{Perf}}
\newcommand{\Sing}{\operatorname{Sing}}
\newcommand{\DSing}{\mathbf{D}^{\operatorname{b}}_{\Sing}}
\newcommand{\Db}{\mathbf{D}^{\operatorname{b}}\!} 
\newcommand{\coker}{\operatorname{coker}}
\newcommand{\thk}{\operatorname{thk}}
\renewcommand{\tilde}{\widetilde}
\newcommand{\op}{{\operatorname{op}}}
\newcommand{\sgn}{\operatorname{sgn}}
\begin{document}

\title{Global Matrix Factorizations}
\author{Kevin H. Lin}
  \address{Department of Mathematics \\ UC Berkeley \\ Berkeley, CA}
  \email{kevin@math.berkeley.edu}
\author{Daniel Pomerleano}
  \address{Department of Mathematics \\ UC Berkeley \\ Berkeley, CA}
  \email{dpomerle@math.berkeley.edu}

\begin{abstract}
We study matrix factorization and curved module categories for Landau--Ginzburg models $(X,W)$ with $X$ a smooth variety, extending parts of the work of Dyckerhoff. Following Positselski, we equip these categories with model category structures. Using results of Rouquier and Orlov, we identify compact generators. Via To\"en's derived Morita theory, we identify Hochschild cohomology with derived endomorphisms of the diagonal curved module; we compute the latter and get the expected result. Finally, we show that our categories are smooth, proper when the singular locus of $W$ is proper, and Calabi--Yau when the total space $X$ is Calabi--Yau.
\end{abstract}

\maketitle

\tableofcontents

\section{Introduction}
Recall the prototypical statement of Homological Mirror Symmetry \cite{Kontsevich}: For every Calabi--Yau manifold $Y$, there is a mirror Calabi--Yau manifold $X$ such that the Fukaya category (resp. derived category) of $Y$ is equivalent to the derived category (resp. Fukaya category) of $X$. Suppose now that $Y$ is not a Calabi--Yau manifold but, say, a smooth toric Fano variety considered as a symplectic manifold. Then the mirror of $X$ is a \emph{complex Landau--Ginzburg model}, or \emph{LG model} for short \cite{Auroux}: a pair $(X,W)$, where $X$ is a complex manifold or variety and $W$ is a holomorphic or regular function.
The function $W$ is called the \emph{superpotential}. The statement of Homological Mirror Symmetry in this situation becomes: The Fukaya category of $Y$ is equivalent to the \emph{matrix factorization category of $(X,W)$}. Moreover when $Y$ has a complex structure and $(X,W)$ has a symplectic structure, the derived category of $Y$ should be equivalent to the Fukaya--Seidel category \cite{Seidel} of $(X,W)$.

If $X=\Spec A$ for a commutative finite type $\mathbb{C}$-algebra $A$ and if $W \in A$ has a single critical value, which we assume without loss of generality to be $0 \in \mathbb{C}$, then the differential $\ZZ/2\ZZ$-graded category of matrix factorizations $\MF(X,W)$ is defined as follows.
This category has as objects
\[P = ( \xymatrix{P_1 \ar@/^/[r]^{p_1} & P_0 \ar@/^/[l]^{p_0} \cr } ) \]
where the $P_i$ are finitely generated projective $A$-modules, and the $p_i$ are $A$-module morphisms satisfying $p_{i+1} \circ p_i = W \cdot \id_{P_i}$.\footnote{The nomenclature ``matrix factorization'' is due to Eisenbud \cite{Eisenbud} and comes from the fact that when the $P_i$ are free modules, the $d_i$ can be thought of as matrices with entries in $A$ that factorize the scalar matrices $W \cdot \id_{P_i}$.} For the morphisms between two matrix factorizations $P$ and $P'$, one takes the $\ZZ/2\ZZ$-graded complex of all $A$-module morphisms 
\[\Hom(P,P') = \bigoplus_{i,j} \Hom_A(P_i,P'_j)\] with grading given by $i+j$ (modulo $2$), and with the differential 
\[\partial: f \longmapsto p' \circ f -(-1)^{|f|} f \circ p \]
for homogeneous $f$. For more details, see \S 3 of \cite{Orlov}. 
Matrix factorizations are also known as \emph{curved $\ZZ/2\ZZ$-graded complexes of finitely generated projective $A$-modules with curvature $W$} \cite{Positselski}. We may truncate this in various ways: \emph{curved complexes of projective modules}, \emph{curved projective modules}, etc.
More generally, we will consider \emph{curved modules} of various kinds, not necessarily finitely generated or projective. 

When $A$ is moreover regular and local, and if $W$ has an isolated singularity at the unique closed point of $\Spec A$, Dyckerhoff \cite{Dyckerhoff} has proven that $\MF(X,W)$ is a smooth and proper Calabi--Yau category satisfying the Hodge-to-de Rham (i.e.\ Hochschild-to-periodic cyclic) degeneration property, and thus a choice of splitting for the degeneration of the spectral sequence gives rise to a 2D TQFT that extends to the Deligne--Mumford boundary \cite{KS, KKP}.



In this paper, we extend some parts of the theory of matrix factorizations to the case of Landau--Ginzburg models $(X,W)$ where $X$ is not necessarily affine. So let $X$ be a smooth variety over $\mathbb{C}$, and let $W$ be a regular function which defines a flat map $X \to \mathbb{A}^1_\mathbb{C}$. Replacing $A$-modules with $\OO_X$-modules, the above definition of matrix factorizations still makes sense --- to be precise, \emph{matrix factorizations} are now defined to be curved complexes of \emph{locally free sheaves of finite type}. However, as is briefly discussed in \S3.2 of \cite{KKP}, the ``correct'' definition of the matrix factorization category in the non-affine situation should take into account the non-vanishing of higher sheaf cohomology. Roughly speaking, this means that we should replace the complex $\Hom(P,P')$ with some form of a derived complex $\RHom(P,P')$, for instance via a \v{C}ech or Dolbeault resolution of the sheaf complex $\sheafHom_{\mathcal{O}_X}(P,P')$. 

To make the above precise, we consider in section 2 of this paper the category $\QC(X,W)$ of curved complexes of \emph{quasi-coherent} sheaves. We equip this category with a model category structure for which fibrant objects are curved complexes of injective sheaves. This gives rise to the dg category $\Inj(X,W)$, which is a dg enhancement of the absolute derived category $\Dabs \QC(X,W)$.
Via fibrant replacement, we define the derived complex $\RHom(P,P')$ of morphisms and thusly the ``correct'' matrix factorization dg category $\MFdg(X,W)$. Furthermore, we show that matrix factorizations are compact when considered as objects of $\Dabs \QC(X,W)$ and that the idempotent completion of the subcategory thereof recovers $\Dabs\QC(X,W)_{\c}$, the full subcategory of all compact objects. This section relies on Positselski's homological theory of curved modules \cite{Positselski} and forms the technical foundation for our paper.

In section 3, we will compute the Hochshild cohomology of $\Inj(X,W)$ and hence that of $\MFdg(X,W)$, yielding a result which was anticipated in \cite{KKP}. To this end, like Dyckerhoff, we identify a compact generator of the category --- all of our subsequent results rely on this important observation. Dyckerhoff identifies generators for matrix factorization categories in the local situation, but we identify generators in the global situation for the corresponding \emph{derived categories of singularities} of the zero fibers using the work of Rouquier \cite{Rou}. A theorem of Orlov \cite{Orlov2} says that the two categories are the same. On the other hand, Dyckerhoff is able to explicitly compute the endomorphism dg algebras of his generators. In our case, there is no clear way in general to associate an explicit matrix factorization to a generator of the derived category of singularities, and so no clear way to compute the endomorphism dg algebra. However, the abstract identification of the category with the dg derived category of the endomorphism dg algebra of the generator is enough to apply the derived Morita theory of \cite{Toen}. To reach our Hochschild cohomology result, we next take a detour into the work of \cite{Yekutieli} on the global Hochschild--Kostant--Rosenberg theorem, and we employ the calculations of \cite{CaldararuTu}. It also follows from results in this section that $\MFdg(X,W)$ is smooth, and that it is proper when the singular locus of $W$ is proper. 

Finally, in section 4, using Grothendieck duality \cite{RD}, we show that if the Landau--Ginzburg model $(X,W)$ satisfies the condition that $X$ is Calabi--Yau, then $\MFdg(X,W)$ is a Calabi--Yau category. We remark that our proof of the Calabi--Yau condition on the category mimics Dyckerhoff's proof in the affine case; however, we are able to identify explicitly how the Calabi--Yau condition on the space $X$ comes into play. This is not immediately transparent in Dyckerhoff's proof, since in his local situation the Calabi--Yau condition on $X$ is automatic.

For ease of notation and exposition, we always assume that our superpotentials $W$ have a single critical value $0 \in \mathbb{C}$. If there are multiple critical values $c_i$, then the results will all still hold by considering the product $\prod_i \MF(X, W-c_i)$ instead of $\MF(X,W)$. Furthermore, unless specified otherwise, when we say \emph{dg category} we will always mean differential $\ZZ/2\ZZ$-graded category, that is, a category enriched over the category of $\ZZ/2\ZZ$-graded complexes of $\mathbb{C}$-vector spaces. More generally, all of our graded objects are $\ZZ/2\ZZ$-graded objects. We work over the field $\mathbb{C}$, since this is the situation of primary interest in applications, but we remark that all results still hold over any field of characteristic zero.

We note that Anatoly Preygel has independently proved results similar to our results in this paper, using an exciting new and different approach involving derived algebraic geometry \cite{Preygel}.

\section{Curved quasi-coherent sheaves and matrix factorizations}

Let $X$ be a smooth variety over $\mathbb{C}$ and $W$ a regular function such that the corresponding morphism $X \to \mathbb{A}^1_\mathbb{C}$ is flat. We now consider the dg category of curved complexes of quasi-coherent sheaves $\QCoh(X,W)$, that is, the category with objects \[E = ( \xymatrix{{E}_1 \ar@/^/[r]^{e_1} &{E}_0\ar@/^/[l]^{e_0} \cr} )\]  where the ${E}_i$ are quasi-coherent sheaves of $\OO_X$-modules and the $e_i$ are morphisms of $\OO_X$-modules satisfying, as before, $e_{i+1} \circ e_i = W \cdot \id_{E_i}$. The morphism complexes are defined exactly as before except with $\Hom_{\OO_X}$ rather than $\Hom_A$. We will denote by $E[1]$ the curved complex 
\[ ( \xymatrix{{E}_0 \ar@/^/[r]^{ - e_0} &{E}_1\ar@/^/[l]^{ - e_1} \cr} ) . \]
Furthermore, one can define the cone of a morphism and a class of exact triangles in $\QCoh(X,W)$ which together with the shift functor $E \mapsto E[1]$ makes the homotopy category $[\QCoh(X,W)] = H^0(\QCoh(X,W))$ a triangulated category. For more details, please refer to \cite{Orlov, Orlov2}. 

More generally, given any dg category $\mathcal{C}$ of curved objects, we will let $[\mathcal{C}]$ denote the homotopy category of $\mathcal{C}$ with triangulated category structure defined in the same way. Also we will use the functor $E \mapsto E^\#$ sending a curved object to the underlying graded object gotten by forgetting the maps $e_0$ and $e_1$.

\begin{defn} 
Denote by $\Acy[\QC(X,W)] \subset[\QC(X,W)]$ the thick triangulated subcategory generated by\footnote{This means that we take recursively all shifts, cones, and direct summands \cite{Rou}.} 
the total curved complexes of exact triples of curved quasi-coherent $\mathcal{O}_X$-modules. Objects of $\Acy[\QC(X,W)]$ are called \emph{acyclic}. The triangulated category $\Dabs\QC (X,W)$ is defined to be the quotient triangulated category \[\frac{[\QC(X,W)]}{\Acy[\QC(X,W)]}.\] We call this category the \emph{absolute derived category}. This definition is also used in \cite{Positselski, Orlov2}.
\end{defn}

\begin{rem}
Note that in our curved situation, we are unable to define the derived category in the usual way by inverting quasi-isomorphisms --- we cannot speak of cohomology of a \emph{curved} complex, since we don't have ``$d^2=0$'', and thus we cannot speak of quasi-isomorphism of curved complexes. Similarly, the usual notion of acyclicity does not make sense. However, note that in the case of ordinary uncurved complexes of sheaves, the total complex of an exact sequence of complexes is acyclic. This motivates the definitions of acyclicity and absolute derived category. 
\end{rem}

\begin{lem}
\label{lem:triangulated}
Let H be a triangulated category and $A, F$ be full triangulated subcategories. Then the natural functor $F/(A \cap F) \to H/A$ is an equivalence of triangulated
categories if for any object $X \subset H$ there exists an object $Y \in F$ together with a morphism
$X \to Y $ in H such that a cone of that morphism belongs to $A$.\end{lem}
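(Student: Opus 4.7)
The plan is to prove the comparison functor $F/(A \cap F) \to H/A$ is essentially surjective and fully faithful. Because the hypothesis produces morphisms $X \to Y$ with target in $F$, I would work with the right-roof description of the Verdier calculus: a morphism $U \to V$ in $H/A$ is represented by a diagram $U \xrightarrow{f} Z \xleftarrow{s} V$ with $\Cone(s) \in A$, and two such roofs are equivalent if they can be dominated by a common larger roof. Essential surjectivity is then immediate: given $X \in H$, apply the hypothesis to obtain $Y \in F$ with a map $X \to Y$ whose cone lies in $A$, which becomes an isomorphism in $H/A$.

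For fullness, I would start from a right roof $Y_1 \xrightarrow{f} Z \xleftarrow{s} Y_2$ in $H$ (with $Y_1, Y_2 \in F$ and $\Cone(s) \in A$) and apply the hypothesis to $Z$ to obtain $h : Z \to Y_3$ with $Y_3 \in F$ and $\Cone(h) \in A$. The composite $h \circ s : Y_2 \to Y_3$ then fits into a distinguished triangle (via the octahedral axiom) $\Cone(s) \to \Cone(h \circ s) \to \Cone(h) \to \Cone(s)[1]$, so $\Cone(h \circ s) \in A$. Moreover, since $Y_2, Y_3 \in F$ and $F$ is a triangulated subcategory, $\Cone(h \circ s) \in F$ as well, hence $\Cone(h \circ s) \in A \cap F$. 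The new roof $Y_1 \xrightarrow{h \circ f} Y_3 \xleftarrow{h \circ s} Y_2$ therefore lives entirely in $F$, represents the same morphism in $H/A$ as the original (the two roofs are related by $h : Z \to Y_3$), and lifts the given morphism to $F/(A \cap F)$.

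For faithfulness, suppose a morphism $\phi$ in $F/(A \cap F)$, represented by a roof $Y_1 \xrightarrow{f} Z \xleftarrow{s} Y_2$ with $Z \in F$ and $\Cone(s) \in A \cap F$, becomes zero in $H/A$. By the calculus of fractions this means there exists $t : Z \to Z'$ in $H$ with $\Cone(t) \in A$ and $t \circ f = 0$. Apply the hypothesis to $Z'$ to obtain $h : Z' \to Y_3$ with $Y_3 \in F$ and $\Cone(h) \in A$. Then $h \circ t : Z \to Y_3$ has cone in $A$ (same octahedral argument) and in $F$ (since $Z, Y_3 \in F$), hence in $A \cap F$, and satisfies $(h \circ t) \circ f = 0$. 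This is the witness that $\phi = 0$ already inside $F/(A \cap F)$.

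The main obstacle will be bookkeeping the calculus of fractions in the correct direction: because the hypothesis is one-sided (providing $X \to Y$, not $Y \to X$), one is forced to use right roofs throughout rather than left roofs, and at each step one must verify that after post-composing with the auxiliary map $h$ produced by the hypothesis the relevant cones land in $A \cap F$ and not merely in $A$. The latter refinement is automatic once one uses that $F$ is closed under cones, which is the only substantive point where "$F$ is a triangulated subcategory" enters the argument.
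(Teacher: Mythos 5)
Your argument is correct: the right-roof (co-roof) calculus of fractions is exactly the right tool here, and the two post-composition steps (pushing the apex $Z$, resp.\ the witness $Z'$, into $F$ via the hypothesis and checking via the octahedron that the resulting cones land in $A\cap F$) are precisely what is needed for fullness and faithfulness. The paper itself states this lemma without proof, citing it as a standard fact about Verdier quotients; your proof is the standard one, and the only point worth flagging is the usual convention that $A\cap F$ is taken to be closed under isomorphism (since the cone of a morphism between objects of $F$ is only isomorphic to an object of $F$), which is harmless.
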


\begin{prop}
\label{prop:injective}Denote by $\Inj(X,W)$ the full subcategory of $\QC(X,W)$ consisting of curved complexes of injective quasi-coherent sheaves. The natural functor $[\Inj(X,W)] \to \Dabs\QC(X,W)$ is an equivalence of triangulated categories. Therefore the category $\Inj(X,W)$ defines a dg enhancement of $\Dabs\QC(X,W).$\end{prop}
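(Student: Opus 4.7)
The plan is to apply Lemma~\ref{lem:triangulated} with $H = [\QC(X,W)]$, $A = \Acy[\QC(X,W)]$, and $F = [\Inj(X,W)]$. The lemma then produces an equivalence $[\Inj(X,W)]/(A \cap F) \to \Dabs\QC(X,W)$, and this specializes to the asserted equivalence $[\Inj(X,W)] \to \Dabs\QC(X,W)$ once one knows in addition that $A \cap F$ contains only zero objects. Thus two separate statements must be checked:

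\textbf{(a) Injective resolutions exist.} Every $E \in \QC(X,W)$ admits a morphism $E \to I$ in $\QC(X,W)$ with $I \in \Inj(X,W)$ and cone in $\Acy[\QC(X,W)]$.

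\textbf{(b) Acyclic injectives are contractible.} Every object of $\Inj(X,W) \cap \Acy[\QC(X,W)]$ is isomorphic to zero in $[\Inj(X,W)]$.

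For (a) I would adapt the Cartan--Eilenberg construction to the curved setting, in the spirit of Positselski's treatment of the affine case. Since $X$ is Noetherian, $\QC(X)$ has enough injectives, so one can pick injective resolutions $E_0 \to J_0^\bullet$ and $E_1 \to J_1^\bullet$ in $\QC(X)$ and then lift the curved differentials $e_0, e_1$ to chain maps $\tilde e_0 : J_0^\bullet \to J_1^\bullet$ and $\tilde e_1 : J_1^\bullet \to J_0^\bullet$. A priori the compositions $\tilde e_1 \tilde e_0$ and $\tilde e_0 \tilde e_1$ agree with $W \cdot \id$ only up to chain homotopy, so I would inductively perturb the lifts via an obstruction-theoretic argument, which succeeds because the resolutions are bounded below and consist of injectives. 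Totalizing the resulting bicomplex yields a $\ZZ/2\ZZ$-graded object $I$ whose components are finite direct sums of injective sheaves (hence themselves injective) and whose differential squares exactly to $W \cdot \id$; thus $I \in \Inj(X,W)$. A brutal-truncation filtration of the $J_i^\bullet$ identifies the cone of $E \to I$ with an iterated extension of totalizations of exact triples of curved quasi-coherent sheaves, so the cone lies in $\Acy[\QC(X,W)]$.

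For (b) I would prove the stronger statement that $\Hom_{[\QC(X,W)]}(T, I) = 0$ whenever $T$ is acyclic and $I$ is injective. Induction on the class of objects generating $\Acy[\QC(X,W)]$ as a thick subcategory reduces this to the base case $T = \Tot(0 \to A \to B \to C \to 0)$ for an exact triple of curved quasi-coherent sheaves: the component-wise injectivity of $I$ allows one to lift any map $T \to I$ through the inclusion $A \hookrightarrow B$, and after sorting out the contribution of the curved differentials this lift furnishes the desired null-homotopy. Taking $T = I$ itself then forces $\id_I \simeq 0$, i.e., $I$ is contractible.

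The main obstacle is the obstruction-theoretic correction of the lifts in (a): simultaneously perturbing $\tilde e_0$ and $\tilde e_1$ so that their two compositions equal $W \cdot \id$ on the nose and not merely up to homotopy. This relies on the injectivity of the $J_i^n$ (so that the relevant $\operatorname{Ext}$ obstructions vanish) together with the boundedness below of the resolutions (so that only finitely many corrections are needed in each $\ZZ/2\ZZ$-graded degree); this is also precisely the step where one is forced to work with the absolute derived category rather than some naive variant. Once (a) and (b) are in place, Lemma~\ref{lem:triangulated} directly yields the asserted equivalence of triangulated categories, and since $\Inj(X,W)$ is by definition a dg category whose homotopy category is $[\Inj(X,W)]$, the dg enhancement statement is then immediate.
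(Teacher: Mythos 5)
Your overall strategy is the paper's: apply Lemma~\ref{lem:triangulated} with $H=[\QC(X,W)]$, $A=\Acy[\QC(X,W)]$, $F=[\Inj(X,W)]$, and verify (a) the existence of injective resolutions and (b) that $A\cap F=0$. Your step (b) coincides with the paper's first step ($\Hom(T,I)$ is the totalization of a short exact sequence of ordinary complexes, exact by componentwise injectivity of $I$, hence acyclic; thickness plus $T=I$ gives contractibility) and is fine. But the paper's step (a) is \emph{not} a Cartan--Eilenberg argument: it embeds the whole curved object $B$ at once into the cofreely cogenerated curved complex $G^-(I_0)$, where $I_0$ is a graded injective containing $B^\#$. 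Because $G^-$ is adjoint to $(-)^\#$, no lifting or obstruction theory is needed; one then iterates on cokernels and uses the finite homological dimension of $\QCoh(X)$ (this is exactly where smoothness of $X$ enters) to terminate after finitely many steps with a finite exact sequence of curved complexes of injectives, whose totalization is the desired $J$.

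Your version of step (a) has a genuine gap at the perturbation step. What your concluding cone argument requires is a finite exact sequence $0\to E\to I^0\to\cdots\to I^n\to 0$ in which each $I^k$ is itself a curved complex of injectives and each map is a morphism of curved complexes; only then does splitting into short exact sequences of curved modules exhibit the cone as a \emph{finite} iterated extension of generators of $\Acy[\QC(X,W)]$. The obstruction theory you invoke does not deliver this. Injectivity and boundedness below give the vanishing of the negative groups $\operatorname{Ext}^{-j}(E^\#,E^\#)$, which lets you build a total differential $\delta+\tilde e+h_1+h_2+\cdots$ with higher homotopy components $h_j$ lowering the resolution degree; it does not let you correct $\tilde e_0,\tilde e_1$ so that $\tilde e_1\tilde e_0=W\cdot\id$ holds on the nose in each resolution degree --- that is a strictification problem, not an obstruction-vanishing problem, and there is no curved Cartan--Eilenberg construction to fall back on since curved complexes have no cycles or cohomology to horseshoe over. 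Once the $h_j$ are present, neither brutal truncation is a subcomplex of the totalization ($\delta$ raises the resolution degree while the $h_j$ lower it), so the filtration identifying the cone with an iterated extension of totalizations of exact triples breaks down. Separately, even granting strictness you must justify that the resolutions can be chosen \emph{finite} (your ``finite direct sums'' presupposes this): $\Acy[\QC(X,W)]$ is only a thick subcategory and is not closed under infinite iterated extensions, and finiteness is precisely what smoothness of $X$ buys via the finite injective dimension of quasi-coherent sheaves. The clean repair is to replace the componentwise resolutions by Positselski's $G^-$ construction as in the paper.
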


\begin{proof} This is a scheme theoretic version of Theorems 3.5 and 3.6 of \cite{Positselski} and is proved in exactly the same way. We give a sketch of the proof. We wish to apply the previous lemma and so we proceed in two steps. 

The first step is very general --- we claim that if $B \in \Acy[\QC(X,W)]$ and $I$ is a curved complex of injective sheaves, then $\Hom(B,I)$ is an acyclic complex. Indeed if $B$ is the total curved module of an exact sequence of curved modules \[0 \to L \to M \to N \to 0,\] then $\Hom(B,I)$ is the total complex of the exact sequence of complexes \[0 \to \Hom(N,I) \to \Hom(M,I) \to \Hom(L,I) \to 0,\] so it is acyclic. 
Since $\Acy[\QC(X,W)]$ is the thick triangulated subcategory generated by such $B$, the claim follows. We see immediately that $\Acy[\QC(X,W)]\cap \Inj(X,W) = 0$.

It remains to show that for each $B \in \QC(X,W)$, there is a morphism $r : B \to J$ such that $J \in \Inj(X,W)$ and $\Cone(r) \in \Acy[\QC(X,W)]$. Indeed, there is an embedding of any curved complex of quasi-coherent sheaves $B$ into a curved complex of injectives $G_0$. To see this, note that the underlying graded sheaf $B^\#$ embeds into an injective graded quasi-coherent sheaf $I_0$, as the category of quasi-coherent sheaves has enough injectives. One then takes the curved complex $G^-(I_0)$ of quasi-coherent sheaves cofreely cogenerated by $I_0$ (see the proof of Theorem 3.6 of \cite{Positselski}), and one checks that $B$ embeds into $G_0 := G^-(I_0)$ and that $G^-(I_0)^\#$ is injective. Let $H_0$ be the cokernel $G_0/B$, and similarly we construct a curved complex $G_1 = G^-(I_1)$ of injectives into which $H_0$ embeds. Proceeding inductively, we obtain a resolution $0 \to B \to G_0 \to G_1 \to \cdots$ of $B$ by curved complexes of injectives.
However, since $X$ is smooth, the category $\QCoh(X)$ of quasi-coherent sheaves has finite homological dimension, and hence for some finite $n$ we must have that the underlying graded sheaf $H_n^\#$ of the cokernel $H_n = G_{n} / G_{n-1}$ is injective. Let $J$ be the total curved module of the exact complex of curved modules \[G_0 \to G_1 \to \cdots \to G_n \to H_n.\] We are finished.
\end{proof}
 
\begin{rem}
Dyckerhoff \cite{Dyckerhoff} considers a regular local $k$-algebra $R$ with maximal ideal $\mathfrak{m}$ and residue field $R/\mathfrak{m} = k$. 
He takes a superpotential $W \in R$ with isolated singularity at the closed point $\mathfrak{m}$, and he considers the category $\MF^{\infty}(R,W)$ consisting of curved complexes of \emph{projective} $R$-modules of arbitrary rank. In \cite{Positselski}, it is proved that $[\MF^{\infty}(R,W)]$ and $\Dabs\QC (R,W)$ are equivalent as triangulated categories. In our case, we are forced to use curved complexes of \emph{injective} modules because there are not enough projectives in the global situation.
\end{rem}


\begin{thm} There is a model category structure \cite{Hovey} on $\QC(X,W)$ where a morphism is a weak equivalence if its cone is acyclic; a morphism is cofibrant if it is monic and it is fibrant if it is epic and its kernel is a curved complex of injective sheaves; fibrant objects are curved complexes of injective sheaves.\end{thm}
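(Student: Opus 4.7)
The plan is to adapt Positselski's argument from the affine case (\cite{Positselski}, Theorem 3.7) to the quasi-coherent scheme-theoretic setting, using the injective resolutions constructed in the proof of Proposition~\ref{prop:injective} as the key input. The class of weak equivalences is given as stated, the cofibrations are the monomorphisms, and I would define the fibrations indirectly by the right lifting property against trivial cofibrations, then verify a posteriori that they coincide with the epimorphisms whose kernel is a curved complex of injective sheaves.

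First I would check the formal preliminaries: $\QC(X,W)$ is bicomplete because the underlying category $\QC(X)$ is and the curved structure amounts to additional morphism data preserved by (co)limits; the weak equivalences satisfy two-out-of-three and closure under retracts because cones behave functorially in the triangulated quotient $\Dabs\QC(X,W)$, and an object is in $\Acy[\QC(X,W)]$ iff it is isomorphic to zero there. Closure under retracts for monomorphisms and epimorphisms is standard in any abelian category. For the identification of fibrant objects, it then suffices to observe that the terminal object is $0$, so $E\to 0$ being a fibration means the kernel, which is $E$ itself, is a curved complex of injectives.

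Next I would establish the two lifting axioms. For any monomorphism $i\colon A\inj C$ and any epimorphism $p\colon J\surj B$ whose kernel $K$ is a curved complex of injective sheaves, a commutative square lifts iff the obstruction class in a suitable $\mathrm{Ext}^{1}$ built out of $\sheafHom_{\OO_X}(C/A,K)$ vanishes; when additionally $K$ is acyclic (the trivial fibration case), vanishing follows from the computation already carried out in the proof of Proposition~\ref{prop:injective} showing $\Hom(B',I)$ is acyclic whenever $B'$ is acyclic and $I$ is a curved complex of injectives. The dual case — acyclic cofibration against arbitrary fibration — reduces to the same computation since an acyclic cofibration $A\inj A'$ has $A'/A$ acyclic.

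The main obstacle is the factorization axiom, particularly factoring an arbitrary morphism $f\colon A\to B$ as a cofibration followed by a trivial fibration. The strategy is to first apply the cofree--cogenerator construction $G^-$ recalled in the proof of Proposition~\ref{prop:injective} to embed $B\inj J_0\to J_1\to\cdots$ and, using that $\QCoh(X)$ has finite homological dimension (since $X$ is smooth), truncate after finitely many steps to produce an embedding of $B$ into a fibrant object with acyclic cokernel; then form the pushout along $f$ and compose appropriately to realize $f$ as a monomorphism followed by an epimorphism with acyclic injective kernel. The (acyclic cofibration, fibration) factorization is handled by a dual construction, where the cone of the embedding is used to absorb the non-fibrant part. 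Verifying that the resulting classes of fibrations really coincide with epimorphisms having injective curved kernels (and in particular that fibrant objects are curved complexes of injectives) is then extracted from the lifting axioms in the standard way via the retract argument. The only genuinely new ingredient beyond Positselski's local proof is the systematic use of finite homological dimension of $\QCoh(X)$ to terminate the resolution process, which also appears in the proof of Proposition~\ref{prop:injective}.
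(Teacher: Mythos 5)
Note first that the paper does not actually prove this theorem: it states only that the argument is ``again similar to Positselski's discussion in the affine case'' and omits it, so your plan of adapting \cite{Positselski} using the resolutions from \prettyref{prop:injective} is the intended route. Your treatment of the preliminaries, the lifting axioms, and the identification of fibrant objects is essentially sound, with two small caveats: the obstruction to lifting lives in the cohomology of the global complex $\Hom(C/A,K)$, not of $\sheafHom(C/A,K)$; and for the lifting of cofibrations against trivial fibrations you need the additional observation that an acyclic curved complex of injectives is contractible (which follows from $\Acy[\QC(X,W)]\cap[\Inj(X,W)]=0$), so that $\Hom(M,K)$ is acyclic for \emph{arbitrary} $M$, not just acyclic $M$.

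The genuine gap is in the factorization axioms. For the (cofibration, trivial fibration) factorization your construction does not work as stated: embedding the \emph{target} $B$ into a fibrant object with acyclic cokernel produces a map out of $B$, which cannot be pushed out along a map $f\colon A\to B$ \emph{into} $B$, and in any case an embedding with acyclic cokernel is the wrong kind of approximation for manufacturing an epimorphism with contractible injective kernel. The correct construction here is elementary and needs neither pushouts nor finite homological dimension: embed $A^\#$ into an injective graded quasi-coherent sheaf $I$, so that $A\inj G^-(I)$ with $G^-(I)$ a contractible curved complex of injectives, and take $A\inj B\oplus G^-(I)\surj B$. Conversely, the (trivial cofibration, fibration) factorization --- the half that genuinely uses \prettyref{prop:injective} and finite homological dimension --- cannot be dispatched as ``a dual construction'': the naive candidate $A\xrightarrow{(j,f)}J\oplus B\to B$, with $A\to J$ the fibrant embedding, fails because the cokernel of $(j,f)$ is an extension of an acyclic object by $B$ and hence not acyclic. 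What is actually needed is the dual approximation statement: every object $X$ sits in an exact triple $0\to F'\to W'\to X\to 0$ with $F'$ a curved complex of injectives and $W'$ acyclic. One obtains this by covering $X$ by the contractible object $G^+(X^\#)$ and pushing the fibrant embedding of the resulting kernel out along that kernel (the Salce trick). Given it, an arbitrary monomorphism $A\inj N$ is factored by pulling $N\to N/A$ back along $W'\to N/A$, and the general case of $f$ follows by combining with the first factorization. Supplying this dual approximation sequence is the one real piece of work your outline omits.
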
 

In the language of \cite{Toen}, we have $\Inj(X,W)=\Int(\QC(X,W))$, where $\Int(-)$ denotes the full subcategory consisting of objects which are both fibrant and cofibrant. The proof of the above theorem is again similar to Positselski's discussion in the affine case and we omit it since we will not need the full strength of the theorem in the rest of this paper. 

In view of the theorem, it is interesting to speculate that the functor $\Inj(-,W)$ defines a sheaf of dg categories on the Zariski site of $X$ which is fibrant for an appropriate model category structure, but the current literature on sheaves of dg categories appears to be insufficient for making a precise conjecture. 

Now, given two curved complexes of quasi-coherent sheaves ${F}$ and ${F'}$, we have an \emph{uncurved} complex of sheaves $\sheafHom ({F}, {F'})$ which is defined by
\[ U \mapsto \Hom_{\OO_U}(F|_U,F'|_U). \]

\begin{defn}
\label{def:RHom}
We have a derived functor
\[ \sheafRHom : \Dabs\QC(X,W) \times \Dabs\QC(X,W) \to \D \Mod(\OO_X). \]
It is defined by first doing at least one of the following: 
\begin{enumerate}
\item replacing the second argument by a weakly equivalent curved complex of injectives
\item if possible, replacing the first argument by a weakly equivalent curved complex of locally free sheaves of finite rank

\end{enumerate}
and then taking $\sheafHom$.
We have another derived functor
\[ \RHom : \Dabs\QC(X,W) \times \Dabs\QC(X,W) \to \D\Mod(\mathbb{C})\]
defined by first doing (1) and then taking $\Hom$.
\end{defn} 

We now define two different categories of matrix factorizations (same objects, different morphisms).

\begin{defn} Define $\mf(X,W)$, $\Acy[ \mf(X,W)]$, and $\Dabs \mf(X,W)$ in the same way as we defined the analogous respective $\QCoh$ entities above, except here the objects are curved complexes of locally free sheaves of finite type, i.e. curved vector bundles, i.e. matrix factorizations.
\end{defn}

\begin{defn}
Denote by $\MFdg(X,W)$ the full dg subcategory of $\Inj(X,W)$ consisting of objects weakly equivalent to matrix factorizations.
\end{defn}

\begin{rem}
What we call $\Dabs \mf (X,W)$ agrees with what Orlov calls $\MF_0(X,W)$ in \cite{Orlov2}. Recall from the introduction that we are assuming in this paper that $W$ has only one singular value $0 \in \mathbb{C}$.
\end{rem}

In some arguments it will be convenient to use a third definition --- a \v{C}ech model of $\MFdg(X,W)$. Let $\mathfrak{U} = \{ U_i = \Spec A_i \}$ be a finite covering of $X$ by affine subsets. We follow the notation of \S III.4 of \cite{Hartshorne}, and we write $\C^\bullet(\mathfrak{U},F)$ for the sheaf \v{C}ech complex of a sheaf $F$. We define the dg category $\MFCech(X,W)$ as follows: The objects are matrix factorizations; the morphisms $\Hom_{\MFCech}(P,P')$ are given by the global sections of the total complex of the double complex $\C^\bullet(\mathfrak{U}, \sheafHom(P,P'))$ with the first differential being the \v{C}ech differential and the second differential induced by that of $\sheafHom(P,P')$. Although $\MFCech(X,W)$ depends on the covering $\mathfrak{U}$, we suppress this from the notation because different coverings yield weakly equivalent dg categories\footnote{The category of dg categories has a model category structure for which weak equivalences are quasi-equivalences of dg categories.}. It is a tedious but standard consideration to see the following:

\begin{prop} We have a weak equivalence $\MFCech(X,W) \to \MFdg(X,W)$ of dg categories. \end{prop}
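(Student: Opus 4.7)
The plan is to exploit the fact that whenever $P,P'$ are matrix factorizations, the complex $\sheafHom(P,P')$ is an honest \emph{uncurved} $\ZZ/2$-graded complex of quasi-coherent sheaves: the curvatures $W\cdot\id$ cancel in the verification of $d^2=0$. Both $\MFCech(X,W)$ and $\MFdg(X,W)$ are then by design computing the hypercohomology of this common complex, using a \v{C}ech resolution in one case and an injective resolution in the other. The cleanest comparison is via a zigzag of dg functors through an auxiliary dg category combining both.

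Fix a functorial fibrant replacement $\iota_P\colon P\to I_P$ in $\QC(X,W)$, available from the proof of \prettyref{prop:injective}. Let $\mathfrak{M}$ be the dg category with objects matrix factorizations and morphisms $\Hom_{\mathfrak{M}}(P,P'):=\Gamma(\Tot\C^\bullet(\mathfrak{U},\sheafHom(P,I_{P'})))$. Two natural dg functors land in $\mathfrak{M}$: first, $\Phi\colon\MFCech(X,W)\to\mathfrak{M}$, induced on Hom complexes by post-composition with $\sheafHom(P,\iota_{P'})$; second, after restricting $\MFdg(X,W)$ to the quasi-equivalent full subcategory spanned by the $I_P$'s, $\Psi\colon\MFdg(X,W)\to\mathfrak{M}$, sending $I_P\mapsto P$ on objects and on Hom complexes given by $\sheafHom(\iota_P,I_{P'})$ followed by the \v{C}ech augmentation $\sheafHom(P,I_{P'})\to\Tot\C^\bullet(\mathfrak{U},\sheafHom(P,I_{P'}))$. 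Once both $\Phi$ and $\Psi$ are shown to be quasi-equivalences, composing $\Phi$ with a quasi-inverse of $\Psi$ in the Tabuada model structure on dg categories yields the required weak equivalence.

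Verifying that $\Phi$ and $\Psi$ induce quasi-isomorphisms on Hom complexes rests on three standard ingredients: (a) $\sheafHom(P,-)$ preserves weak equivalences when $P$ is locally free of finite rank, because a sheaf-local version of the argument in \prettyref{prop:injective} shows that $\sheafHom(P,A)$ is genuinely acyclic for any $A\in\Acy[\QC(X,W)]$; (b) for the finite affine cover $\mathfrak{U}$ of the separated scheme $X$, the functor $\Gamma\circ\Tot\C^\bullet(\mathfrak{U},-)$ preserves quasi-isomorphisms of $\ZZ/2$-graded complexes of quasi-coherent sheaves and computes hypercohomology; and (c) each term $\sheafHom(P_i,(I_{P'})_j)$ is itself injective --- since $\sheafHom(P_i,-)=P_i^\vee\otimes(-)$ preserves injectives when $P_i$ is locally free of finite rank --- so the \v{C}ech augmentation on $\sheafHom(P,I_{P'})$ is a quasi-isomorphism after taking $\Gamma$. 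The hard part will be (a): translating categorical membership in $\Acy[\QC(X,W)]$ into honest sheaf-level acyclicity. The point is that $\sheafHom(P,-)$ with $P$ locally free carries the generating exact triples of curved complexes to their totalizations viewed as genuinely uncurved acyclic complexes of sheaves, and this acyclicity is preserved under the shifts, cones, and direct summands that generate $\Acy[\QC(X,W)]$.
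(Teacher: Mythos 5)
You should know at the outset that the paper offers no proof of this proposition --- it is dismissed as ``a tedious but standard consideration'' --- so your attempt is being measured against the standard argument rather than against anything in the text. The homological content of your proposal is correct, and it is the right content: (a) holds because $\sheafHom(P,-)$ with $P$ locally free of finite rank sends the generating exact triples to totalizations of exact triples of honest uncurved complexes, and the class of $A$ with $\sheafHom(P,A)$ acyclic is thick; (b) holds by the finite filtration of the \v{C}ech double complex by \v{C}ech degree, using that sections over the affine $U_{i_0\cdots i_p}$ are exact on quasi-coherent sheaves (this is where you need $\sheafHom(P,I_{P'})\cong P^\vee\otimes I_{P'}$ to be quasi-coherent, again because $P$ is locally free of finite rank); and (c) is correct since $\Hom(-,P_i^\vee\otimes J)\cong\Hom(-\otimes P_i,J)$ is exact for $J$ injective.

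The genuine gap is structural: $\mathfrak{M}$ as you define it is not a dg category. There is no composition map
\[
\Gamma\Tot\C^\bullet(\mathfrak{U},\sheafHom(P',I_{P''}))\otimes\Gamma\Tot\C^\bullet(\mathfrak{U},\sheafHom(P,I_{P'}))\longrightarrow\Gamma\Tot\C^\bullet(\mathfrak{U},\sheafHom(P,I_{P''})),
\]
since a local section of $\sheafHom(P',I_{P''})$ cannot be composed with one of $\sheafHom(P,I_{P'})$; functoriality of the fibrant replacement does not help, because applying it to a map into $I_{P''}$ produces a map into $I_{I_{P''}}$ rather than $I_{P''}$, and in any case the replacement is not a sheaf-level construction that could be applied to \v{C}ech cochains. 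Consequently neither $\Phi$ nor $\Psi$ is a dg functor and the advertised zigzag does not exist as stated. The repair is standard but must be said: keep exactly the complexes $\mathfrak{M}(P,P')=\Gamma\Tot\C^\bullet(\mathfrak{U},\sheafHom(P,I_{P'}))$, but package them as a $\MFdg$--$\MFCech$-bimodule: $\Hom_{\MFdg}(I_{P'},I_{P''})=\Gamma\sheafHom(I_{P'},I_{P''})$ acts by restriction and post-composition, $\Hom_{\MFCech}(P,P')$ acts by the \v{C}ech cup product and pre-composition, and the two actions commute. Your quasi-isomorphisms (a)--(c) then say precisely that this bimodule is right quasi-representable (by $I_P$) and quasi-fully-faithful, hence is an isomorphism in $\operatorname{Ho}(\mathrm{dgcat})$ by To\"en's description of morphisms of dg categories as quasi-functors; alternatively such a quasi-functor can be rectified to an honest zigzag. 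Without this repackaging, or some other genuine dg category sitting in the middle, the argument does not close.
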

 
\begin{defn}[\cite{Orlov}]
For any variety $Y$ over $\mathbb{C}$, we denote by $\Db \Coh(Y)$ the bounded derived category of coherent sheaves on $Y$, and we denote by $\Perf(Y)$ the full triangulated subcategory of perfect complexes. We set 
\[ \DSing(Y) = \frac{\Db \Coh (Y)}{\mathfrak{Perf}(Y)} .\]
\end{defn}

\begin{prop}  $[\MFdg(X,W)]$ is equivalent to $\DSing(X_0)$, where $X_0$ denotes the fiber $W^{-1}(0)$. \end{prop}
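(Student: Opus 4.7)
The plan is to reduce the statement to Orlov's theorem \cite{Orlov2} by first identifying $[\MFdg(X,W)]$ with the absolute derived category $\Dabs\mf(X,W)$ of matrix factorizations, the latter of which (as noted in the remark above) coincides with Orlov's category $\MF_0(X,W)$.

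First I would construct a natural comparison functor. Fibrant replacement gives a dg functor $\mf(X,W) \to \Inj(X,W)$ whose essential image lies in $\MFdg(X,W)$ by definition. Passing to homotopy categories and invoking \prettyref{prop:injective} to identify $[\Inj(X,W)] \simeq \Dabs\QC(X,W)$, one obtains a triangulated functor
\[
[\mf(X,W)] \longrightarrow [\MFdg(X,W)] \hookrightarrow \Dabs\QC(X,W).
\]
This is essentially surjective onto $[\MFdg(X,W)]$ by the very definition of $\MFdg(X,W)$: its objects are precisely those of $\Inj(X,W)$ weakly equivalent to some matrix factorization. It also kills $\Acy[\mf(X,W)]$, since a totalization of an exact triple of matrix factorizations is, a fortiori, a totalization of an exact triple of curved quasi-coherent sheaves, hence lies in $\Acy[\QC(X,W)]$. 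So it descends to a functor
\[
\Dabs\mf(X,W) \longrightarrow [\MFdg(X,W)].
\]

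The main step is to verify that this descended functor is fully faithful, which reduces to showing that the natural functor $\Dabs\mf(X,W) \to \Dabs\QC(X,W)$ is fully faithful. This is a result of Orlov \cite{Orlov2}; the essential input is that matrix factorizations are compact in $\Dabs\QC(X,W)$, already mentioned in the introduction to this section, so that $\Hom$ out of them is well-behaved with respect to the Verdier quotient $[\QC(X,W)] \to \Dabs\QC(X,W)$.

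Once the identification $[\MFdg(X,W)] \simeq \Dabs\mf(X,W)$ is in hand, the proposition follows from Orlov's equivalence $\Dabs\mf(X,W) \simeq \DSing(X_0)$ of \cite{Orlov2}, realized by sending a matrix factorization $(p_1 \colon P_1 \to P_0)$ to $\coker(p_1)$, which is a coherent sheaf on $X_0$ because $W$ acts by zero on it. The main obstacle in the outline is the full faithfulness statement for $\Dabs\mf(X,W) \to \Dabs\QC(X,W)$; everything else is either formal from the definitions or a direct citation of Orlov's work.
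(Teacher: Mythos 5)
Your overall architecture coincides with the paper's: both arguments reduce the proposition to (i) Orlov's equivalence $\Dabs\mf(X,W)\simeq\DSing(X_0)$ and (ii) full faithfulness of the natural functor $\Dabs\mf(X,W)\to\Dabs\QC(X,W)$, and your formal reduction steps (descent of the comparison functor through $\Acy[\mf(X,W)]$, essential surjectivity onto $[\MFdg(X,W)]$) are correct. The gap is in (ii), which you rightly single out as the main obstacle but then dispose of by citation rather than proof. This statement is not simply ``a result of Orlov'': \cite{Orlov2} compares the matrix factorization category with $\DSing(X_0)$, not with the absolute derived category of curved \emph{quasi-coherent} sheaves, and the present paper treats (ii) as something it must establish itself. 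The mechanism you gesture at --- compactness of matrix factorizations in $\Dabs\QC(X,W)$ --- does not yield (ii): compactness controls the interaction of $\Hom(P,-)$ with infinite direct sums, whereas the difficulty in comparing $\Hom_{\Dabs\mf}(P,P')$ with $\Hom_{\Dabs\QC}(P,P')$ is that a roof in the larger Verdier quotient may pass through curved quasi-coherent objects far from being matrix factorizations, and one must show such roofs can be replaced by roofs inside $\mf(X,W)$. (Compactness is moreover only proved in the paper in the proposition \emph{following} this one, so invoking it here would also reorder the logic.)

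The paper fills this gap with a two-step argument you would need to supply. First, $\Dabs\Coh(X,W)\to\Dabs\QC(X,W)$ is fully faithful because, by Exercise II.5.15 of \cite{Hartshorne}, any morphism from a curved coherent sheaf to an acyclic curved quasi-coherent sheaf factors through an acyclic curved \emph{coherent} sheaf; this is exactly what permits rewriting roofs. Second, $\Dabs\mf(X,W)\to\Dabs\Coh(X,W)$ is an equivalence: smoothness of $X$ gives finite locally free resolutions, hence for every curved coherent sheaf $C$ a triangle $F\to C\to A$ with $F$ a matrix factorization and $A$ acyclic, so the dual of \prettyref{lem:triangulated} identifies $[\mf(X,W)]/([\mf(X,W)]\cap\Acy[\Coh(X,W)])$ with $\Dabs\Coh(X,W)$; one then checks, via the cokernel functor and a local contractibility argument on an affine cover, that $[\mf(X,W)]\cap\Acy[\Coh(X,W)]$ is precisely $\Acy[\mf(X,W)]$. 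Without some version of this argument your proof is incomplete at its central step.
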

\begin{proof} There is a natural triangulated functor $\coker: [\mf(X,W)] \to \DSing (X_0)$ given by
\[ P = ( \xymatrix{P_1 \ar@/^/[r]^{p_1} & P_0 \ar@/^/[l]^{p_0} \cr } ) \longmapsto \coker(p_1) =: \coker(P) . \]

Let $\{ U_i \}$ be as above an affine open cover of $X$.
Consider an object $P$ of $\mf(X,W)$ whose image in the homotopy category lies in $\Acy[ \mf(X,W)]$. Then $P|_{U_i}$ is in $\Acy[ \mf(U_i,W)]$. By an argument similar to the first part of the proof of \prettyref{prop:injective} (except in this situation consider projectives instead of injectives), this subcategory is $0$, which means that $P|_{U_i}$ is contractible and hence its cokernel is locally free \cite{Orlov}. Since this holds for each $U_i$ we conclude that $\coker({P})$ is locally free and therefore vanishes in $\DSing(X_0)$. Thus the $\coker$ functor factors through $\Dabs \mf (X,W)$, and \cite{Orlov2} proves that the induced functor $\Dabs \mf (X,W) \to \DSing(X_0)$ is an equivalence of triangulated categories.

To prove the proposition, it suffices to show that the natural functor \[\Dabs\mf(X,W) \to \Dabs\QC(X,W)\] is fully faithful. For this purpose, it is useful to consider the categories $\Coh(X,W)$, $\Acy[ \Coh(X,W)]$, and $\Dabs\Coh(X,W)$ defined in the same way as the respective $\mf$ and $\QCoh$ entities. By Exercise II.5.15 of \cite{Hartshorne}, any morphism from a curved coherent sheaf $F \in \Coh(X,W)$ to an acyclic curved quasi-coherent sheaf $A \in \Acy [\QC(X,W)]$ factors through an acyclic curved coherent sheaf $A' \in \Acy [\Coh(X,W)]$. It follows that $\Dabs \Coh (X,W) \to \Dabs \QCoh(X,W)$ is fully faithful.

It remains to show that $\Dabs \mf (X,W) \to \Dabs \Coh (X,W)$ is fully faithful. To see this, note that since we are on a smooth scheme, any coherent sheaf has a finite resolution by vector bundles. 
Therefore, following a similar argument as in \prettyref{prop:injective}, for any curved coherent sheaf $C$ we can produce a triangle $F\to C\to A$ where $F$ is a matrix factorization and $A$ is acyclic. It follows from the dual version of \prettyref{lem:triangulated} that \[[\mf(X,W)]/([\mf(X,W)]\cap \Acy[\Coh(X,W)]) \cong \Dabs\Coh(X,W).\] The thick subcategory $[\mf(X,W)]\cap \Acy[\Coh(X,W)]$ can be identified with the thick subcategory $\Acy[\mf(X,W)]$. We see this as follows. Taking $\coker (P)$ of objects $P$ of the former category gives the zero object in $\DSing(X_0)$ by the same local argument explained at the beginning of this proof. By Orlov's result mentioned above, it follows that $P$ must have been equivalent to an object in $\Acy[\mf(X,W)]$ to begin with.
\end{proof}

\begin{rem}
The cokernel of an acyclic curved coherent sheaf does not necessarily represent the zero object in $\DSing(X_0)$. However, one may still define a derived cokernel functor $\Dabs \Coh (X,W) \to \DSing(X_0)$ by composing the functor $\Dabs \mf (X,W) \to \DSing(X_0)$ with an inverse to the equivalence $\Dabs \mf(X,W) \to \Dabs \Coh (X,W)$ from above.
\end{rem}

\begin{prop} Objects of $[\MFdg(X,W)]$ are compact as objects of the absolute derived category $\Dabs \QC(X,W)$. 
\end{prop}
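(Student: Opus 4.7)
The plan is to reduce to the case where $P$ is an actual matrix factorization (a curved complex of vector bundles of finite rank) and then use the \v{C}ech model to express $\RHom(P,-)$ concretely. Since $P$ belongs to $\MFdg(X,W)$, by definition it is weakly equivalent to some matrix factorization, so we may assume $P$ is itself a curved vector bundle of finite rank without loss of generality. Compactness amounts to the assertion that $\Hom_{\Dabs \QC(X,W)}(P,-)$ commutes with arbitrary direct sums, where we first need to note that direct sums in $\Dabs \QC(X,W)$ are represented by direct sums in $\QC(X,W)$ --- this is a general feature of Positselski's absolute derived categories, since the acyclic subcategory is stable under coproducts.

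First I would establish a \v{C}ech formula for $\RHom$. For $E \in \QC(X,W)$ arbitrary and $\mathfrak{U}$ a finite affine cover of $X$, I claim
\[
\RHom(P,E) \simeq \Gamma\bigl(X,\Tot \C^\bullet(\mathfrak{U}, \sheafHom(P,E))\bigr).
\]
The justification: since $P$ is locally free of finite rank, $\sheafHom(P,-) \cong P^\vee \otimes (-)$ preserves injective quasi-coherent sheaves on the noetherian scheme $X$, so computing $\sheafHom(P,J)$ against a fibrant replacement $E \to J$ yields a curved complex of injectives whose global sections compute $\RHom(P,E)$; separatedness of $X$ together with affineness of $\mathfrak{U}$ and its intersections then identifies this with the \v{C}ech expression above. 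This is essentially the content of the weak equivalence $\MFCech(X,W) \to \MFdg(X,W)$ extended to allow the target to be an arbitrary curved quasi-coherent sheaf.

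Given the \v{C}ech formula, commutation with direct sums is verified step by step. Because $P$ is locally free of finite rank, $\sheafHom(P, \bigoplus_\alpha E_\alpha) = \bigoplus_\alpha \sheafHom(P, E_\alpha)$. Each \v{C}ech term is a finite product of global sections on the affine intersections $U_{i_0 \cdots i_p}$ (using separatedness), and on an affine noetherian scheme the global sections functor on quasi-coherent sheaves commutes with arbitrary direct sums. The total complex is bounded since $\mathfrak{U}$ is finite, so $\Tot$ and $\Gamma(X,-)$ commute with direct sums. Finally $H^0$ commutes with direct sums. Stringing these together yields
\[
\Hom_{\Dabs \QC(X,W)}(P, \textstyle\bigoplus_\alpha E_\alpha) = \textstyle\bigoplus_\alpha \Hom_{\Dabs \QC(X,W)}(P, E_\alpha),
\]
proving compactness of $P$.

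The main obstacle is the validation of the \v{C}ech formula for $\RHom$ in the curved setting: one needs that fibrant replacement can be ``pushed inside'' $\sheafHom(P,-)$ (handled by $P$ being a vector bundle of finite rank) and that the \v{C}ech complex on a separated noetherian scheme with affine cover genuinely computes derived global sections on curved quasi-coherent complexes. Everything else is essentially bookkeeping, relying crucially on finiteness of the rank of $P$, finiteness of the affine cover, and noetherianness of $X$.
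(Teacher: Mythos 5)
Your argument is correct and follows essentially the same route as the paper's: compute $\RHom(P,-)$ via the finite affine \v{C}ech complex of $\sheafHom(P,-)$ and then commute everything with arbitrary direct sums using the finite rank of $P$, the finiteness of the cover, and the affineness of the intersections. The only real difference is cosmetic --- where you invoke $\sheafHom(P,-)\cong P^\vee\otimes(-)$ directly, the paper cites Positselski's affine compactness result for the restrictions $P|_{U_{i_0\cdots i_p}}$, which is in any case the local input both arguments need in order to justify that the underived \v{C}ech complex genuinely computes $\RHom$.
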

\begin{proof}
Let $P$ be a matrix factorization and $Q$ an arbitrary curved quasi-coherent sheaf. 
It is standard to see that $\RHom(P,Q)$ can be computed using the complex $\Gamma \Tot \C^\bullet ( \mathfrak{U} , \sheafHom(P,Q))$.

Since the $U_i$ (and their intersections $U_{ij}$, etc.) are affine, it follows that the restrictions $P|_{U_i}$ are compact in $\Dabs \QC(U_i,W)$ by \cite{Positselski} (and analogously for the intersections $U_{ij}$, etc.). Let $Q = \bigoplus_i Q_i$ be a direct sum of curved quasi-coherent sheaves. We have \[\Gamma \Tot \C^\bullet ( \mathfrak{U} , \sheafHom(P, \bigoplus_i Q_i)) \cong \Gamma \Tot \C^\bullet( \mathfrak{U},\bigoplus_i \sheafHom(P,Q_i)),\] because the restrictions $P|_{U_i}$, $P|_{U_{ij}}$, etc. are compact, and so finally we have \[\Gamma \Tot \C^\bullet( \mathfrak{U},\bigoplus_i \sheafHom(P,Q_i)) = \bigoplus_i \Gamma \Tot \C^\bullet ( \mathfrak{U} , \sheafHom(P,Q_i)).\]
This completes the proof.
\end{proof}

For what follows, we need the following well-known lemma \cite{BonVDB}:

\begin{lem} Let $\mathcal{T}$ be a triangulated category with arbitrary direct sums and which is compactly generated by a set of objects $C$. Then the set of compact objects of $\mathcal{T}$ is $C^{\thk}$, the thick closure of $C$.\end{lem}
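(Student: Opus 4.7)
The plan is to establish $\mathcal{T}_c = C^{\thk}$ by proving the two inclusions separately.

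For the easy inclusion $C^{\thk} \subseteq \mathcal{T}_c$, I would verify directly that $\mathcal{T}_c$ is itself a thick subcategory of $\mathcal{T}$. Closure under shifts is immediate from the definition of compactness. Closure under extensions follows from a five-lemma argument: given a triangle $X' \to X \to X''$ with $X', X''$ compact, apply $\Hom(-, \bigoplus_j Y_j)$ to produce a long exact sequence, and compare it with the direct sum of the long exact sequences obtained from $\Hom(-, Y_j)$ for each summand; the five-lemma forces $\Hom(X, \bigoplus_j Y_j) \cong \bigoplus_j \Hom(X, Y_j)$. Closure under retracts is straightforward, since a retract $X$ of a compact $X'$ inherits the direct-sum preservation property of $\Hom(X', -)$. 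Since $C \subseteq \mathcal{T}_c$ by the hypothesis of compact generation, we get $C^{\thk} \subseteq \mathcal{T}_c$.

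For the nontrivial inclusion $\mathcal{T}_c \subseteq C^{\thk}$, the main device is a ``cellular tower'' resolution. For any $Y \in \mathcal{T}$, I would construct inductively a sequence $0 = Y_0 \to Y_1 \to Y_2 \to \cdots$ together with compatible maps $Y_n \to Y$, where at each stage $Y_{n+1}$ is obtained from $Y_n$ by attaching a triangle whose third vertex is a coproduct $\bigoplus_\alpha \Sigma^{k_\alpha} c_\alpha$ of shifts of objects in $C$, indexed over the family of maps from such shifts that detect the difference between $Y_n$ and $Y$. Compact generation of $\mathcal{T}$ by $C$ then forces the homotopy colimit of the tower to be $Y$ itself, realized by the standard triangle $\bigoplus_n Y_n \to \bigoplus_n Y_n \to Y$ in $\mathcal{T}$.

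Now specialize to $X \in \mathcal{T}_c$. Compactness of $X$ implies that applying $\Hom(X, -)$ to this presentation gives an isomorphism $\operatorname{colim}_n \Hom(X, Y_n) \cong \Hom(X, Y) = \Hom(X, X)$, so the identity $\id_X$ lifts through some $Y_N$, exhibiting $X$ as a retract of $Y_N$. The main obstacle is that each $Y_n$ is built via possibly infinite coproducts, so a priori $Y_N$ sits only in the localizing subcategory generated by $C$, not yet in $C^{\thk}$. To descend, I would invoke the compactness of $X$ once more at each attaching stage: any map from $X$ into a coproduct $\bigoplus_\alpha \Sigma^{k_\alpha} c_\alpha$ factors through a finite sub-coproduct. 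Inductively refining the lift along the tower then replaces $Y_N$ by an object $Y'_N$ built from finitely many cones of finite direct sums of shifts of elements of $C$; this $Y'_N$ lies in $C^{\thk}$, and the retraction $X \to Y_N \to X$ restricts to $X \to Y'_N \to X$, giving $X \in C^{\thk}$.
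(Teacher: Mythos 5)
The paper does not prove this lemma at all: it is stated as ``well-known'' with a citation to Bondal--van den Bergh, who in turn rely on Neeman's theorem on compactly generated categories. Your proposal is, in outline, exactly the standard argument behind that citation, so there is nothing to compare against internally --- you have supplied the proof the paper chose to import. Both halves of your argument are the right ones: the easy inclusion via the observation that $\mathcal{T}_\c$ is thick (shifts, a five-lemma for cones, retracts), and the hard inclusion via a cellular tower $0 = Y_0 \to Y_1 \to \cdots$ with $\operatorname{hocolim} Y_n \cong Y$ realized by the triangle $\bigoplus_n Y_n \to \bigoplus_n Y_n \to Y$, so that a compact $X$ is a retract of a finite stage $Y_N$.

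The one place your sketch is genuinely thin is the final descent from infinite to finite coproducts, which is the real content of the theorem. ``Inductively refining the lift'' does not quite work if the induction is run only on the single map $X \to Y_N$: after you factor the composite $X \to Y_N \to \Sigma D_{N-1}$ through a finite subcoproduct $\Sigma D'$ and form $Y_N' = \Cone(D' \to Y_{N-1})$, the object you must next shrink is $Y_{N-1}$, but the map you now need to control is the attaching map $D' \to Y_{N-1}$, whose source is a finite coproduct of shifts of elements of $C$ --- not $X$. So the correct inductive statement is the stronger one: \emph{every} map from \emph{any} compact object into $Y_n$ factors through an $n$-stage cellular object built from finite coproducts of shifts of elements of $C$; one then applies the inductive hypothesis to $D' \to Y_{N-1}$ and uses the octahedral axiom to splice the finite models together and to lift $X \to Y_N$ through the resulting finite subobject. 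With that strengthening the argument closes, and this is precisely how Neeman's writeup proceeds; as it stands your last paragraph asserts the conclusion of that induction rather than carrying it out.
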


\begin{prop} $\overline{[\MFdg(X,W)]}\cong \Dabs\QC(X,W)_{\c}$, where the notation $\overline{\mathcal{C}}$ means idempotent completion of a category $\mathcal{C}$, and the notation $\mathcal{C}_\c$ means the full subcategory of $\mathcal{C}$ consisting of objects whose image in the triangulated category $[\mathcal{C}]$ is compact.
\end{prop}

\begin{proof} By the above lemma it suffices to prove that $[\MFdg(X,W)] \cong \Dabs \mf(X,W) \cong \Dabs \Coh (X,W)$ generates $\Dabs\QC(X,W)$. What we want to prove is the global version of Theorem 2 on page 43 of \cite{Positselski} and the proof is very similar. Let $J$ be an object of $\Inj(X,W)$. By the standard Bousfield localization argument, what we have to show is that if $\Hom(B,J)$ is acyclic for every coherent curved module $B$, then $J$ is contractible, meaning that it is weakly equivalent to the zero object.

Consider the ordered set of pairs $(C,h)$, where $C$ is a curved quasi-coherent subsheaf of $J$ and $h$ is a contracting homotopy for the inclusion $C \hookrightarrow J$. Using Zorn's lemma, let $(M,h)$ be a maximal such pair. 
We show that if $M \neq J$, then $M \hookrightarrow J$ factors through some $M' \hookrightarrow J$, and the contracting homotopy $h$ extends to a contracting homotopy $h'$ for $M' \hookrightarrow J$. From here the result follows.

So suppose $M \neq J$. Then again using Exercise II.5.15 of \cite{Hartshorne}, we can find a curved quasi-coherent subsheaf $M'$ of $J$ such that $M'$ strictly contains $M$ and the quotient $M'/M$ is coherent. Producing the contracting homotopy proceeds exactly as in \cite{Positselski}.\end{proof}

The following will be used in the next section:

\begin{lem} 
\label{lem:coker}
Let $F$ be a coherent sheaf on $W^{-1}(0) = X_0$ considered as an object of $\Coh(X,W)$. Suppose $P$ is a matrix factorization and $f: P \to F$ is a morphism of curved sheaves such that $\Cone(f)$ is acyclic. Then $\coker( P) \cong F$ in $\DSing(X_0)$. Moreover, such a $P$ exists.
\end{lem}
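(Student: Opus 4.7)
The plan is to combine the equivalences of the preceding proposition with an explicit matrix factorization representative of $F$. The three-step strategy is: handle existence via a curved vector bundle resolution, reduce the isomorphism $\coker(P)\cong F$ to an identification of the derived cokernel $\Phi(F)$, and verify the identification by producing an Eisenbud-style matrix factorization.

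\emph{Existence.} Mimicking the proof of \prettyref{prop:injective} but replacing curved injectives by curved vector bundles and injections by surjections, I would inductively build a resolution of $F \in \Coh(X,W)$ by curved vector bundles. Since $X$ is smooth, $\Coh(X)$ has finite homological dimension, so at some finite stage the kernel becomes locally free of finite type. Totalizing this bounded resolution yields a matrix factorization $P$ together with a morphism $f : P \to F$ whose cone is the total complex of an exact sequence of curved coherent sheaves, hence lies in $\Acy[\Coh(X,W)]$.

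\emph{Reduction.} From the preceding proposition, both the inclusion $\Dabs\mf(X,W) \hookrightarrow \Dabs\Coh(X,W)$ and $\coker : \Dabs\mf(X,W) \to \DSing(X_0)$ are equivalences; composing the inverse of the former with the latter yields the derived cokernel functor $\Phi : \Dabs\Coh(X,W) \to \DSing(X_0)$ of the preceding Remark. Since $\Cone(f)$ is acyclic, $P \cong F$ in $\Dabs\Coh(X,W)$, and so $\coker(P) = \Phi(P) \cong \Phi(F)$ in $\DSing(X_0)$. The lemma thereby reduces to showing $\Phi(F) \cong F$ for the coherent sheaf $F$ on $X_0$.

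\emph{Identifying $\Phi(F)$.} It would suffice to exhibit one matrix factorization $P_F$ whose cokernel literally equals $F$ as a sheaf on $X_0$, since then $\Phi(F) = \coker(P_F) = F$ in $\DSing(X_0)$. The natural candidate is the Eisenbud folding: take a finite locally free resolution $E_\bullet \to F$ on the smooth scheme $X$; since $W$ annihilates $F$, the chain map $W\cdot\id_{E_\bullet}$ is null-homotopic, and a choice of null-homotopy $s$ converts the $\ZZ/2\ZZ$-folded complex $(E_\bullet, d+s)$ into a matrix factorization $P_F$ with $\coker(P_F) = E_0/\im(E_1 \to E_0) = F$. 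The main obstacle is that, while $W\cdot\id$ vanishes in the derived category, producing a chain-level null-homotopy \emph{globally} on $E_\bullet$ is subtle because vector bundles are not globally projective in $\Coh(X)$; this can be remedied either by modifying $E_\bullet$ (for instance, absorbing the obstruction by adding an acyclic complex assembled from a \v{C}ech cover) or by appealing to Orlov's construction \cite{Orlov2} used in establishing $\Dabs\mf(X,W) \simeq \DSing(X_0)$.
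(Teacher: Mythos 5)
Your existence and reduction steps are fine and match the paper's implicit structure: $P\cong F$ in $\Dabs\Coh(X,W)$, full faithfulness of $\Dabs\mf(X,W)\to\Dabs\Coh(X,W)$ reduces everything to exhibiting a single matrix factorization whose honest cokernel is $F$ (or something identified with $F$ in $\DSing(X_0)$). The gap is in your third step. You correctly name the obstacle --- for a locally free resolution $E_\bullet\to F$ of length greater than two, the chain map $W\cdot\id_{E_\bullet}$ is null-homotopic only up to the derived category, and a chain-level homotopy need not exist globally since vector bundles are not projective objects of $\Coh(X)$ --- but you do not overcome it; ``absorbing the obstruction by adding an acyclic complex assembled from a \v{C}ech cover'' is not carried out and is not obviously possible while keeping the terms locally free of finite type, and ``appealing to Orlov's construction'' is circular in spirit, since what is being proved is precisely the compatibility of the cokernel functor with the equivalence.

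The paper's resolution of exactly this difficulty is to first reduce to the case where $F$ is maximal Cohen--Macaulay: any coherent sheaf $F$ on $X_0$ admits a resolution $0\to F'\to\cdots\to F_1\to F\to 0$ with the $F_i$ locally free and $F'$ maximal Cohen--Macaulay, so that $F\cong F'$ both in $\Dabs\Coh(X,W)$ and in $\DSing(X_0)$. For a maximal Cohen--Macaulay sheaf there is a \emph{two-term} locally free resolution $0\to Q_1\xrightarrow{\,q_1\,}Q_0\to F\to 0$ on $X$, and in the two-term case the required homotopy is automatic and canonical: since $W$ kills $F$, the map $W\cdot\id_{Q_0}$ lands in $\ker(Q_0\to F)=\im(q_1)\cong Q_1$, so it factors uniquely as $q_1\circ q_0$, and $q_0\circ q_1=W\cdot\id_{Q_1}$ follows because $q_1$ is monic. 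This produces the Eisenbud-type matrix factorization $Q=(Q_1\rightleftarrows Q_0)$ with $\coker(Q)=F$ and $Q\cong F$ in $\Dabs\Coh(X,W)$ (the paper packages the latter via the contractible free curved module $G^+(Q_0)$), with no global projectivity needed. If you insert this maximal Cohen--Macaulay reduction in place of your unspecified remedy, your argument closes up and coincides with the paper's.
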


\begin{proof} 
We know that $P \cong F$ in $\Dabs \Coh (X,W)$. First we check that the result holds if $F$, as a coherent sheaf, is \emph{maximal Cohen--Macaulay}, which means that $\mathcal{E}\!\operatorname{xt}^i(F,\OO_{X_0}) = 0$ for $i > 0$. To see this, note that there is a length two resolution of $F$ by locally free sheaves on $X$ (see the proof of Theorem 3.9 in \cite{Orlov})
\[ 0 \to Q_1 \to Q_0 \to F \to 0. \]

Let $G^+(Q_0)$ be the free curved module generated by $Q_0$ (see again Theorem 3.6 of \cite{Positselski}).
We have a surjection of curved sheaves $G^+(Q_0)\to {F}$ whose kernel is isomorphic to ${Q}[1]$, where \[{Q}=(\xymatrix{Q_1 \ar@/^/[r]^{q_1} & Q_0 \ar@/^/[l]^{q_0} \cr } ) , \] with $q_1$ the inclusion map and $q_0$ the homotopy expressing the fact that $W$ kills $F$. 

We clearly have $\coker(Q) = \coker(F) = F$. Since $G^+(Q_0)$ is contractible, we have an isomorphism $Q \cong F$ in $\Dabs \Coh (X,W)$. Hence we have $P \cong F \cong Q$ in $\Dabs\Coh(X,W)$. Previously we checked that the functor \[\Dabs\mf(X,W) \to \Dabs\Coh(X,W)\] is fully faithful, and hence $P \cong Q$ in $\Dabs \mf(X,W)$. Thus $\coker(P) \cong \coker(Q) = F$ in $\DSing(X_0)$.

For the general case, for any coherent sheaf ${F}$, there is a resolution $$0\to F' \to F_r \to \cdots \to F_2\to{F}_1\to {F} \to 0$$ where the ${F}_i$ are locally free and ${F}'$ is maximal Cohen--Macaulay. We assume without loss of generality that $r$ is even; if it is odd we simply consider an additional syzygy $F_{r+1} \to F_r$ and take $F'$ to be its kernel. Then we conclude that ${F} \cong{F}'$ in $\Dabs\Coh(X,W)$ as well as in $\DSing(X_0)$ and so the lemma is proven.\end{proof}

\begin{rem} 
After receiving an earlier version of this article, Leonid Positselski has expanded on the ideas in this section to produce a new proof of Orlov's equivalence \cite{Positselski2}. 
\end{rem} 

\section{Compact generators and Hochschild (co)homology}

Let $X$ be as above a smooth variety over $\mathbb{C}$, and let $W$ be an arbitrary superpotential. The purpose of this section is to prove the following theorem:
\begin{thm} 
\label{thm:HH}
The Hochschild cohomology of the category $\MFdg(X,W)$ is given by $\RGamma(\Lambda^\bullet T_X,[W,-]),$ where $[- , -]$ denotes the Schouten--Nijenhuis bracket.
\end{thm}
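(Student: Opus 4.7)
The plan is to follow the strategy outlined in the introduction: produce a compact generator, identify Hochschild cohomology via Toën's derived Morita theory with the derived endomorphisms of the diagonal curved bimodule, and then compute the latter by means of a curved Hochschild--Kostant--Rosenberg argument. Throughout, I will freely use Section~2, in particular the equivalences $[\MFdg(X,W)] \cong \Dabs \mf(X,W) \cong \DSing(X_0)$ and the fact that matrix factorizations generate $\Dabs\QC(X,W)$ in the sense that their idempotent completion recovers $\Dabs\QC(X,W)_\c$.

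First, I would establish the existence of a compact generator. By Rouquier's theorem, $\Db\Coh(X_0)$ has a classical generator, and Orlov's result on singularity categories ensures this generator descends to a classical generator $\bar G$ of $\DSing(X_0)$. Pulling back under $\coker: [\mf(X,W)] \to \DSing(X_0)$ (using \prettyref{lem:coker} to lift $\bar G$ to an honest matrix factorization), we obtain a compact generator $G \in \MFdg(X,W)$. Let $B = \End_{\MFdg(X,W)}(G)$ be its endomorphism dg algebra. Standard results then give a Morita equivalence $\MFdg(X,W) \simeq \Perf(B)$, and Toën's theorem identifies the Hochschild cohomology of either side with $\RHom_{B \otimes B^\op}(B,B)$. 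Equivalently, Hochschild cohomology is the derived endomorphism algebra of the identity functor, i.e.\ of the diagonal bimodule of $\MFdg(X,W)$.

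Next, I would realise the diagonal bimodule geometrically. Consider $(X \times X, W \boxminus W)$ with $W \boxminus W = p_1^* W - p_2^* W$. The structure sheaf $\Delta_* \OO_X$ of the diagonal is annihilated by $W \boxminus W$, so it is naturally a curved $\OO_{X \times X}$-module for this superpotential, and by \prettyref{lem:coker} it is represented by a matrix factorization $\Delta \in \MFdg(X \times X, W \boxminus W)$. Using the \v{C}ech model $\MFCech$ and standard Künneth-type arguments on affine covers, one checks that this $\Delta$ corepresents the identity bimodule on $\MFdg(X,W)$ under the equivalence $\MFdg(X,W) \otimes \MFdg(X,W)^\op \simeq \MFdg(X \times X, W \boxminus W)$. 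Hence Hochschild cohomology is identified with $\RHom_{\MFdg(X \times X, W \boxminus W)}(\Delta,\Delta)$, which by \prettyref{def:RHom} is computed as $\RGamma(X \times X, \sheafRHom(\Delta,\Delta))$.

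The final and most delicate step is the curved HKR calculation. For $W = 0$, Yekutieli's global HKR theorem gives $\sheafRHom_{\OO_{X \times X}}(\Delta_* \OO_X, \Delta_* \OO_X) \simeq \Lambda^\bullet T_X$ as a graded sheaf of $\OO_X$-algebras on the diagonal. Turning on $W$ deforms this resolution into a curved complex, and one has to track what happens to the differential under HKR. Here I would invoke the computations of Caldararu--Tu, who analyse precisely the curved Koszul-type resolution of the diagonal for a Landau--Ginzburg model and show that the induced differential on $\Lambda^\bullet T_X$ is the Schouten--Nijenhuis bracket $[W,-]$ with the function $W$ (viewed as a degree-$0$ polyvector). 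Applying $\RGamma$ then yields $\RGamma(X, \Lambda^\bullet T_X, [W,-])$, as claimed. The main obstacle is the curved HKR identification: one must verify that $\Delta$, built as a matrix factorization over $(X \times X, W \boxminus W)$, is quasi-isomorphic in the absolute derived sense to a Koszul-type resolution of $\Delta_* \OO_X$ whose endomorphism complex realises exactly the Schouten differential, and that this identification is compatible with the global \v{C}ech/injective model used to define $\RHom$ in Section~2. This is where the bulk of the technical work will lie, and it is what Caldararu--Tu and Yekutieli are invoked to handle.
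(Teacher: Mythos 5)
Your overall strategy is the same as the paper's: find a compact generator via Rouquier and Orlov, lift it to a matrix factorization with \prettyref{lem:coker}, invoke To\"en's derived Morita theory to identify Hochschild cohomology with the derived endomorphisms of the diagonal curved module on $(X\times X,\ \pi_1^*W-\pi_2^*W)$, and compute those endomorphisms by a curved Hochschild--Kostant--Rosenberg argument following Yekutieli and Caldararu--Tu. All the main ingredients and references you name are the ones the paper actually uses.

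There is, however, one step you dismiss as ``standard K\"unneth-type arguments'' that is in fact a substantive theorem requiring its own proof, and it is the one place where your outline has a genuine gap. To identify $\MFdg(X,W)\otimes\MFdg(X,W)^{\op}$ with $\MFdg(X\times X,\tilde W)$ via To\"en's formalism, it is not enough to compute $\End(P\otimes Q)\cong\End(P)\otimes\End(Q)$ (that part really is a \v{C}ech/K\"unneth computation); you must also show that the external tensor product $P\otimes Q$ of compact generators is itself a compact generator of the product category. This reduces, via \prettyref{lem:coker} and Orlov's equivalence, to showing that external tensor products of generators of $\Db\Coh$ generate $\Db\Coh$ of the product (\prettyref{thm:generator} in the paper), whose proof uses the decomposition $\Sing(X\times Y)=(\Sing(X)\times Y)\cup(X\times\Sing(Y))$ --- valid over $\mathbb{C}$ but false over imperfect fields --- together with Rouquier's results on generators of categories of complexes with prescribed support. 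Without this generation statement the identification of the functor category, and hence of the identity functor with $\Delta$, does not follow; the rest of your outline (the existence of the generator, the Morita step, and the curved HKR computation via the completed bar complex) matches the paper's proof.
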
 

\begin{rem} One can similarly determine that the Hochschild homology is given by $\RGamma(\Omega^\bullet_X, dW\wedge)$. We focus on Hochschild cohomology, both in the interest of brevity and because in the case when $X$ is Calabi--Yau --- which is actually our primary case of interest --- the Hochschild homology result follows by section 4 of this paper.\end{rem} 

The Hochschild cohomology of a dg category can be defined as the derived endomorphisms of the identity functor of the category \cite{Toen}. The Hochschild cohomology of $\MFdg(X,W)$ is the same as that of $\Inj(X,W)$. Consider the category of endofunctors of $\Inj(X,W)$, and consider the full subcategory consisting of continuous functors. We will identify this full subcategory with the category $\Inj(X \times X, \widetilde{W})$, where \[\tilde{W} := \pi_1^\ast(W) - \pi_2^\ast(W).\] Furthermore, we will identify the identity endofunctor with the diagonal curved complex $\Delta \in \Inj(X \times X, \widetilde{W})$, that is, the structure sheaf $\mathcal{O}_\Delta$ of the diagonal $X \hookrightarrow X \times X$ considered as a curved complex. 
We then compute the Hochschild cohomology of $\MFdg(X,W)$ by computing the derived endomorphisms of $\Delta$.

\begin{lem} $\RHom(\Delta,\Delta) \cong \RGamma(\Lambda^\bullet T_X, [W, -]).$\end{lem}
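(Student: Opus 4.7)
The plan is to compute $\RHom(\Delta,\Delta)$ by constructing an explicit curved locally free resolution of the diagonal, calculating its endomorphism complex, and globalizing the result via the Hochschild--Kostant--Rosenberg machinery of \cite{Yekutieli, CaldararuTu}. By \prettyref{def:RHom}, $\RHom(\Delta,\Delta)$ is computed as $\RGamma(X\times X, \sheafRHom(\Delta,\Delta))$, where one of the copies of $\Delta$ is replaced by a weakly equivalent curved complex of locally free sheaves. Since $\OO_\Delta$ is not itself locally free, the first task is to produce such a replacement.

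The natural candidate is a Koszul matrix factorization, as in \cite{Dyckerhoff}. On an affine open $U \subset X$ admitting étale coordinates $x_1,\dots,x_n$, the sections $u_i := x_i \otimes 1 - 1 \otimes x_i$ generate the ideal of the diagonal in $U\times U$, and since $\widetilde W$ vanishes on $\Delta$ a Taylor expansion gives a factorization $\widetilde W = \sum_i u_i g_i$ with $g_i|_\Delta = \partial_{x_i} W$. The Koszul matrix factorization $K_U$, built on the exterior algebra $\Lambda^\bullet\langle \theta_1,\dots,\theta_n\rangle$ with differential $d$ combining contraction against the $u_i$ with wedge by the $g_i$, satisfies $d^2 = \widetilde W \cdot \id$ and is weakly equivalent to $\Delta|_{U\times U}$. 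A direct calculation of $\sheafHom(K_U,K_U)$ identifies it, after restriction to the diagonal, with $(\Lambda^\bullet T_X|_U,\, \iota_{dW})$. Since the Schouten--Nijenhuis bracket $[W,\xi]$ for $W\in\OO_X\subset\Lambda^0 T_X$ coincides up to sign with $\iota_{dW}\xi$, this is exactly $(\Lambda^\bullet T_X|_U,\, [W,-])$.

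The remaining task is to patch these local formulas into a global quasi-isomorphism of dg sheaves on $X$, for which I would invoke the globalized HKR theorem of \cite{Yekutieli} together with the curved refinement carried out in \cite{CaldararuTu}. The outcome is a canonical global quasi-isomorphism
\[ \sheafRHom_{\OO_{X\times X}}(\Delta,\Delta) \;\simeq\; \bigl(\Lambda^\bullet T_X,\; [W,-]\bigr) \]
of $\ZZ/2$-graded dg sheaves supported on the diagonal. Applying $\RGamma$ then yields the statement of the lemma. The compact generator results of Section~2 and the Čech model $\MFCech$ provide a convenient framework in which to execute this globalization without having to manipulate injective resolutions on $X\times X$ directly.

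The principal obstacle is the globalization step. Both the local Koszul model $K_U$ and the splitting $\widetilde W = \sum_i u_i g_i$ depend on the choice of coordinates, so the local identifications agree on overlaps only up to coherent homotopy; promoting them to a bona fide global quasi-isomorphism requires Yekutieli's formal-geometric resolution of the diagonal in the uncurved case, and \cite{CaldararuTu}'s analysis ensures that the curvature correction persists through the patching so that the target differential is precisely $[W,-]$ and not merely something quasi-isomorphic to it.
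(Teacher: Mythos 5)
Your local analysis is the standard affine story and is essentially correct: on a coordinatized affine open the Koszul matrix factorization of $\widetilde W$ resolves the diagonal, and its endomorphism dg algebra is identified with $(\Lambda^\bullet T_X|_U, \iota_{dW}) = (\Lambda^\bullet T_X|_U, [W,-])$. But the step you yourself flag as ``the principal obstacle'' is a genuine gap, not a technicality, and the references you invoke do not close it in the way you suggest. The local models $K_U$ and the splittings $\widetilde W = \sum_i u_i g_i$ depend on the choice of \'etale coordinates; on overlaps they agree only up to homotopy, and for a general smooth $X$ the diagonal in $X\times X$ admits \emph{no} global Koszul resolution (this is the same obstruction that makes the global HKR theorem nontrivial in the uncurved case). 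Assembling the coordinate-dependent local identifications into a global quasi-isomorphism of sheaves would require constructing and controlling all higher coherence homotopies, which is precisely the work your proposal defers. A global locally free replacement of $\Delta$ does exist abstractly (by \prettyref{lem:coker} applied to $\OO_\Delta$ on $(X\times X,\widetilde W)$), but it is not the Koszul complex and its endomorphisms cannot be computed by your local formula.

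The paper avoids this issue entirely by never patching Koszul resolutions. It works with Yekutieli's completed bar complex $\hatB_q(X) = \OO_{\fX^{q+2}}$, which is canonical and globally defined from the start, equips it with the additional ``curved'' differential $\partial_W$ satisfying $(\partial_B+\partial_W)^2 = \widetilde W\cdot(-)$, and shows by comparison with an injective resolution of $\OO_X$ in $\Mod_{\disc}\OO_{\fX^2}$ that the resulting continuous Hochschild complex computes $\sheafExtII(\Delta,\Delta)$ (which agrees with $\sheafRHom(\Delta,\Delta)$ by finiteness of homological dimension). Local coordinates enter only at the very end, to verify that the already-global HKR map $\pi$ intertwines $[W,-]$ with $\partial_W$; the spectral sequence argument of \cite{CaldararuTu} then upgrades $\pi$ to a quasi-isomorphism. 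So the roles are reversed relative to your plan: the global object is constructed first and the local computation is a pointwise check on it, rather than the local computation being primary and the global object being patched together afterwards. To repair your argument you would either have to carry out the coherent-homotopy gluing explicitly or switch to the bar-type resolution, at which point you have reproduced the paper's proof.
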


\begin{proof} 
We have a functor \cite{PP}
\[\sheafExtII : \Dabs \QC(X,W) \times \Dabs \QC(X,W) \to \D \QC(X). \]
which is defined as follows --- first do at least one of the following two things: 
\begin{enumerate}
\item replace the second argument with a complex $I^\bullet$ of curved complexes of injective sheaves,
\item if possible, replace the first argument with a complex $P^\bullet$ of curved complexes of locally free sheaves of finite rank
\end{enumerate}
then take their $\sheafHom$, and then finally take the direct sum total complex ${\Tot}^{\oplus}$ of the resulting double or triple complex. Because $X$ is smooth and so $\QC(X)$ has finite homological dimension, we can choose such resolutions to have finite length, and thus we have that $\sheafExtII (\Delta,\Delta)$ and $\sheafRHom^\bullet(\Delta,\Delta)$ agree.

Our proof is essentially a curved adaptation of \cite{Yekutieli}.
Let $\fX^q$ be the formal completion of $X^q = X \times \cdots \times X$ along the diagonal $X$. For a commutative algebra $A$, denote by $B_q(A)$ the $q$th term $A \otimes A^{\otimes q} \otimes A$ in the standard bar complex $B(A)$. Let $\hatB_q(A)$ be the $I_q$-adic completion of $B_q(A)$, where $I_q$ is the kernel of the map $B_q(A) \to A$ defined by $a_0 \otimes \cdots \otimes a_{q+1} \mapsto a_0 \cdots a_{q+1}$. On $B(A)$ we have the usual bar complex differential $\partial_B$, and we also have the ``curved'' differential $\partial_W$ which is defined by
\[a_0 \otimes a_1 \otimes \cdots \otimes a_n \mapsto \sum_{i=0}^{n-1} (-1)^{i+1} a_0 \otimes a_1 \otimes \cdots \otimes a_i \otimes W \otimes a_{i+1} \otimes \cdots \otimes a_n. \]
Check that $(\partial_B + \partial_W)^2 = (\tilde{W} \cdot - )$.
It is easy to check that $\partial_B$ and $\partial_W$ are continuous with respect to the $I$-adic topologies.

Define $\hatB_q(X) := \OO_{\fX^{q+2}}$. On an open affine $U = \Spec A \subset X$, we have \[\Gamma(U, \hatB_q(X)) = \hatB_q(A).\] The $\partial_B$ and $\partial_W$ sheafify to give maps $\partial_B : \hatB_q(X) \to \hatB_{q-1}(X)$ and $\partial_W : \hatB_q(X) \to \hatB_{q+1}(X)$. 
Now let $M$ be a curved $\OO_{X^2}$-module with curvature $\tilde{W}$. We denote the \emph{Hochschild cohomology complex of $\OO_X$ with coefficients in $M$} by $\sheafHoch^{\oplus} (\OO_X, M)$, and we define it as follows. It is a $\ZZ/2\ZZ$-graded complex with $i$th component given by
\[ \bigoplus_{p+q = i} \sheafHom^{\cont}_{\OO_{\fX^2}} (\hatB_q(X), M_p). \]
This complex has differential $\partial + \partial_B + \partial_W$, where $\partial$ is induced from $M$, and $\partial_B$ and $\partial_W$ are induced by the respective maps defined above (see page 24 of \cite{PP}).
The superscript ``cont'' denotes continuous morphisms, where we have the adic topology on $\hatB(X)$ and the discrete topology on $M$.

The category $\Mod_{\disc} \OO_{\fX^2}$ (see \S2 of \cite{Yekutieli}) has enough injectives. In fact, it is straightforward to see that these injectives can be chosen to be quasi-coherent as sheaves on $\OO_{X^2}$. Consider $\OO_X$ as an object of this category. Using the construction of \prettyref{prop:injective}, we can then construct a resolution $I^\bullet$ of $\OO_X$ by curved injective quasi-coherent sheaves on $\OO_{X^2}$.


Therefore, we have 
\[\sheafExtII_{\OO_{X^2}} ( \OO_X, \OO_X) = \Tot^{\oplus} \sheafHom_{\OO_{X^2}} (\OO_X, I^\bullet). \]
Since all of the sheaves involved are discrete, we have
\[ \Tot^\oplus \sheafHom_{\OO_{X^2}} (\OO_X, I^\bullet) = \Tot^\oplus \sheafHom_{\OO_{\fX^2}}^{\cont} (\OO_X, I^\bullet). \]
Consider the bicomplex $\sheafHoch^\oplus (\OO_X , I^\bullet)$ and the total complex obtained by taking direct sums of the diagonals of this bicomplex.  Then we consider two maps to this total complex:
\[ \Tot^\oplus \sheafHom_{\OO_{\fX^2}}^{\cont} (\OO_X, I^\bullet) \to \Tot^\oplus \sheafHoch^\oplus (\OO_X, I^\bullet)\]
and
\[
\left(\bigoplus_q \sheafHom_{\OO_{\fX^2}}^{\cont} (\hatB_q(X), \OO_X) , \partial_B + \partial_W \right) \to \Tot^\oplus \sheafHoch^\oplus (\OO_X, I^\bullet). \]
The second map is induced by the morphism $\hatB(X) \to \OO_X$ and is a quasi-isomorphism by a spectral sequence argument. The first map is induced by $\OO_X \to I^\bullet$ and is a quasi-isomorphism by Lemma 2.7 of \cite{Yekutieli}, which states that that when $X$ is smooth over $\mathbb{C}$, the functor 
\[\sheafHom_{\OO_{\fX^2}}^{\cont} (\hatB_q(X), - ) : \Mod_{\disc} \OO_{\fX^2} \to \Mod_{\disc} \OO_{\fX^2} \]
is exact. Our argument here parallels the argument on page 25 of \cite{PP}.

Define $\hatC(X) := \hatB(X) \otimes_{\OO_{\fX^2}} \OO_X$, with induced differential also denoted by $\partial_B + \partial_W$. Then we have an identification of complexes
\[ \left(\bigoplus_q \sheafHom_{\OO_{\fX^2}}^{\cont} (\hatB_q(X), \OO_X) , \partial_B + \partial_W \right) \cong \left( \bigoplus_q \sheafHom_{\OO_{X}}^{\cont} (\hatC_q(X), \OO_X) , \partial_B + \partial_W \right). \]
We now note that there is a quasi-isomorphism 
\[ \pi:(\Lambda^\bullet T_X, 0) \to \left(\bigoplus_q \sheafHom_{\OO_X}^{\cont}(\hatC_{q}(X),\OO_X), \partial_B\right).\]
On an affine subscheme $\Spec A$, each graded component of the right hand side can be identified with polydifferential operators, namely the subcomplex of $\Hom_k(A^{\otimes q} , A)$ consisting of maps that are differential operators in each factor, and the isomorphism has the form 
\[\pi(v_i\wedge  \cdots \wedge v_q)(a_1 \otimes \cdots \otimes a_q) = \frac{1}{q!} \sum_{\sigma \in S_q}  \sgn(\sigma) v_{\sigma(1)} (a_1)  \cdots v_{\sigma(q)}(a_q).\]
One computes explicity in these local coordinates that $\pi([W,-]) = \partial_ W (\pi(-))$ and thus we get an induced map of complexes 
\[ \pi:(\Lambda^\bullet T_X,[W,-])\to \left(\bigoplus_q \sheafHom_{\OO_X}^{\cont}(\hatC_q, \OO_X), \partial_B + \partial_W\right).\]
We conclude using exactly the same spectral sequence argument as in \cite{CaldararuTu} in the affine case that $\pi$ is a quasi-isomorphism. 
\end{proof}

To complete \prettyref{thm:HH}, we need the following result, which uses the language of \cite{Toen}:

\begin{thm} 
\label{thm:functor}
We have \[\RHom_{\c}(\Inj(X_1,W_1),\Inj(X_2,W_2))\cong \Inj(X_1 \times X_2,\pi_1^*(W_1)-\pi_2^*(W_2)).\] Here $\RHom_{\c}$ denotes continuous functors, i.e. functors which commute with arbitrary direct sums. When $X_1=X_2$ and $W_1=W_2$, then the induced equivalence of homotopy categories identifies the identity endofunctor with the diagonal curved sheaf $\Delta$ as an object of $\Dabs \QC (X \times X, \tilde{W})$.
\end{thm}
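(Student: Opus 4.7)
The plan is to apply Toën's derived Morita theory \cite{Toen}. Earlier in the paper we established that $\Inj(X_i,W_i)$ is a dg enhancement of $\Dabs\QC(X_i,W_i)$, and that the latter is compactly generated by $[\MFdg(X_i,W_i)]$ (up to idempotent completion). Passing to a small dg subcategory of generators, Toën's theorem yields a quasi-equivalence
\[\RHom_{\c}(\Inj(X_1,W_1),\Inj(X_2,W_2)) \simeq \widehat{\MFdg(X_1,W_1)^{\op}\otimes\MFdg(X_2,W_2)},\]
where the right-hand side denotes the dg category of fibrant-cofibrant right modules over the tensor product dg category. The problem thus reduces to identifying this bimodule category with $\Inj(X_1\times X_2,\tilde{W})$.

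For this identification I use the external tensor product. Combining the duality of matrix factorizations, which identifies $\MFdg(X_1,W_1)^{\op}$ with $\MFdg(X_1,-W_1)$ after an appropriate sign twist, with the external tensor product $(P,Q)\mapsto P\boxtimes Q$, which adds the pullback curvatures to yield $\pi_1^{\ast}(-W_1)+\pi_2^{\ast}W_2=-\tilde{W}$ on $X_1\times X_2$ (and another sign adjustment gives the curvature $\tilde{W}$), I obtain a dg functor
\[\boxtimes:\MFdg(X_1,W_1)^{\op}\otimes\MFdg(X_2,W_2) \longrightarrow \MFdg(X_1\times X_2,\tilde{W}).\]
Two things must then be verified: (i) quasi-fully-faithfulness of $\boxtimes$, which follows from a Künneth-style computation of morphism complexes carried out in the \v{C}ech model $\MFCech$, using the affine result of \cite{Positselski} term-by-term and the standard Künneth formula for quasi-coherent sheaves on a product of quasi-projective schemes; and (ii) that the essential image of $\boxtimes$ compactly generates $\Dabs\QC(X_1\times X_2,\tilde{W})$. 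Granting (i) and (ii), a second application of Toën's theorem identifies $\Inj(X_1\times X_2,\tilde{W})$ with the bimodule category above, completing the first part of the theorem.

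For the identification of the identity functor with $\Delta$ in the case $X_1=X_2=X$ and $W_1=W_2=W$, I trace through the Morita equivalence: a kernel $K\in\Inj(X\times X,\tilde{W})$ corresponds to the Fourier--Mukai-type functor $F\mapsto R\pi_{2,\ast}(\pi_1^{\ast}F\otimes K)$, suitably derived so that the curvatures cancel. Taking $K=\Delta=\OO_\Delta$ and invoking the projection formula together with $\pi_{2,\ast}\OO_\Delta=\OO_X$, one sees that this functor is the identity on matrix factorizations, and hence on all of $\Inj(X,W)$ by continuity. The main obstacle will be step (ii): proving compact generation of $\Dabs\QC(X_1\times X_2,\tilde{W})$ by external tensor products of matrix factorizations. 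My approach is to reduce to affine opens of the form $U_1\times U_2$, where the local result of \cite{Positselski} furnishes generators on the product, and then to glue via a \v{C}ech/Mayer--Vietoris argument analogous to those already used in this section to establish global compact generation.
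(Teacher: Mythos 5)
Your overall architecture---To\"en's derived Morita theory combined with an external tensor product functor, with fully faithfulness checked by a K\"unneth computation in the \v{C}ech model---matches the paper's proof, and those steps do go through essentially as you describe. The genuine gap is your step (ii): compact generation of $\Dabs\QC(X_1\times X_2,\tilde{W})$ by external tensor products is where nearly all the content of the theorem lives, and the route you propose for it does not work. Positselski's affine result gives generation of $\Dabs\QC(U_1\times U_2,\tilde{W})$ by \emph{arbitrary} matrix factorizations on the product, not by objects of the form $P\boxtimes Q$; the passage from ``all matrix factorizations generate'' to ``external products generate'' is a Thom--Sebastiani/K\"unneth statement for singularity categories that needs proof even affine-locally. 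Nor is there an evident \v{C}ech/Mayer--Vietoris mechanism for propagating compact generation in the curved setting: quasi-coherent sheaves admit no extension by zero, and the paper's own global generation statement (the proposition identifying $\overline{[\MFdg(X,W)]}$ with $\Dabs\QC(X,W)_{\c}$) is proved by a Zorn's lemma argument on subsheaves, not by gluing over a cover.

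What the paper does instead is transport the generation problem to $\Db\Coh$ of the singular loci. Using Orlov's equivalence $\Dabs\mf(X,W)\cong\DSing(X_0)$ and \prettyref{lem:coker}, it chooses single generators $E$ and $F$ of $\Db\Coh(\Sing(W_1^{-1}(0)))$ and $\Db\Coh(\Sing(W_2^{-1}(0)))$ (which generate the respective singularity categories by the Rouquier-based theorem preceding \prettyref{thm:generator}), lifts them to matrix factorizations $P$ on $(X_1,W_1)$ and $Q$ on $(X_2,-W_2)$, and then invokes \prettyref{thm:generator}---if $E$ generates $\Db\Coh(X)$ and $F$ generates $\Db\Coh(Y)$ then $E\otimes F$ generates $\Db\Coh(X\times Y)$---applied to the singular loci, via $\Sing(\tilde{W}^{-1}(0))=\Sing(W_1^{-1}(0))\times\Sing(W_2^{-1}(0))$. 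That theorem is itself proved by induction on dimension using Rouquier's results and the decomposition $\Sing(X\times Y)=(\Sing(X)\times Y)\cup(X\times\Sing(Y))$, and it is precisely where the hypothesis that the ground field is $\mathbb{C}$ enters (the remark following it notes the statement can fail over imperfect fields). This reduces everything to the single dg algebra $A\otimes B^{\op}$ with $A=\RHom(P,P)$ and $B^{\op}=\RHom(Q,Q)$, after which your K\"unneth step and the final string of To\"en isomorphisms proceed as you indicate. Without an argument of this kind, your step (ii) is unsupported, and your treatment of the diagonal, while plausible, is no more detailed than the paper's one-line justification via $\RHom(P\otimes D(P),\Delta)\cong A$.
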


We will use the following theorem, which follows by results in \S7 of \cite{Rou}.

\begin{thm} If $Z$ is a generator for $\Db\Coh(\Sing(X))$
and $Y$ is a generator of $\Perf(X)$, then $i_* Z \oplus Y$ generates $\Db\Coh(X)$. Here $\Sing(X)$ denotes the singular locus of $X$, and $i$ denotes the inclusion $\Sing(X) \hookrightarrow X$.
\end{thm}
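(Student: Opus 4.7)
My plan is to reduce the claim to two facts and then dispatch the second by a devissage, leaving the first as the genuinely nontrivial input from \cite{Rou}. The two facts are: (a) every $F \in \Db\Coh(X)$ fits in an exact triangle $P \to F \to C \to P[1]$ with $P \in \Perf(X)$ and $C \in \Db\Coh(X)$ set-theoretically supported on $\Sing(X)$; and (b) any such $C$ lies in the thick subcategory generated by the essential image of $i_\ast: \Db\Coh(\Sing(X)) \to \Db\Coh(X)$. Granting (a) and (b), $F$ lies in the thick subcategory generated by $P$ and $C$, and since $P \in \langle Y \rangle^{\thk}$ and $C \in \langle i_\ast Z \rangle^{\thk}$, the result follows.

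For (b) I would argue by standard devissage. Stupid truncation of cohomology sheaves reduces to the case where $C$ is a single coherent sheaf set-theoretically supported on $\Sing(X)$. By Noetherianity there exists $N$ with $\II^N C = 0$, where $\II = \II_{\Sing(X)}$, and the finite filtration $0 = \II^N C \subset \II^{N-1} C \subset \cdots \subset C$ has successive quotients $\II^k C / \II^{k+1} C$ that are annihilated by $\II$, hence of the form $i_\ast G_k$ for some $G_k \in \Coh(\Sing(X))$. Since $Z$ generates $\Db\Coh(\Sing(X))$, each $G_k$ is in the thick closure of $Z$, so each $i_\ast G_k$ is in the thick closure of $i_\ast Z$, and iterated cones along the filtration assemble them into $C$.

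Fact (a) is the main obstacle and is the essential content of Rouquier's argument. The underlying intuition is that $F|_U$ is perfect on the smooth locus $U := X \setminus \Sing(X)$, so the failure of $F$ to be perfect is only detected on $\Sing(X)$. Concretely, starting from a locally free resolution $\cdots \to E_{-1} \to E_0 \to F \to 0$ on $X$ (which exists by quasi-projectivity of $X$), the $n$th syzygy $K_n := \ker(E_{-n} \to E_{-n+1})$ has $K_n|_U$ locally free for $n > \dim X$. The delicate point is to promote this local information into a genuinely perfect complex $P$ on $X$ together with a map $P \to F$ whose cone restricts to zero on $U$: one cannot just truncate the resolution naively, since the global $K_n$ need not be locally free along $\Sing(X)$. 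This is precisely where the finite-dimensionality and generation arguments of \S 7 of \cite{Rou} do the real work, producing the required perfect approximation and thereby identifying $\DSing(X) = \Db\Coh(X)/\Perf(X)$ with the $\Sing(X)$-supported part of $\Db\Coh(X)$ modulo $\Perf(X)$.
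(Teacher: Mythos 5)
The paper offers no proof of this statement at all: it is quoted from \S 7 of \cite{Rou} with the single remark that it ``follows by results'' there, so your outline is already more explicit than anything in the text. Your decomposition is the right one and is essentially Rouquier's: exhibit $F$ as built from a perfect piece and a piece supported on $\Sing(X)$, and handle the latter by d\'evissage. Step (b) is correct as written (modulo the quibble that you want \emph{canonical}, not ``stupid,'' truncation to reduce to a single cohomology sheaf): a coherent sheaf killed by a power of $\II = \II_{\Sing(X)}$ is filtered with subquotients that are $\OO_X/\II$-modules, hence of the form $i_\ast G_k$, and since $i_\ast$ is triangulated it carries $\langle Z\rangle^{\thk}$ into $\langle i_\ast Z\rangle^{\thk}$.

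The one genuine issue is that your fact (a) is stated more strongly than what the cited results provide, and there is an obstruction to it in general. An honest triangle $P \to F \to C$ with $P$ perfect and $C$ supported on $\Sing(X)$ forces $P|_U \cong F|_U$ on the smooth locus $U$, hence forces $[F|_U] \in K_0(U)$ to lie in the image of $K_0(X)$; by Thomason--Trobaugh this is exactly the obstruction to extending a perfect complex from $U$ to $X$, and it is why the comparison of $\DSing(X)$ with its subcategory of objects supported on the singular locus is in general only an equivalence \emph{after idempotent completion}. What is available, and what suffices here, is the statement up to direct summands: since $[F|_U \oplus F|_U[1]] = 0$ in $K_0(U)$, the object $F \oplus F[1]$ does admit a perfect approximation whose cone is supported on $\Sing(X)$, and because thick subcategories are closed under direct summands your argument runs verbatim with $F$ replaced by $F \oplus F[1]$. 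With that correction the proof is sound --- though, exactly as in the paper, the substantive input (the perfect approximation away from the singular locus) is still being delegated to \cite{Rou} rather than proved.
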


It follows that generators of $\Db\Coh(\Sing(X))$ are also generators of $\DSing(X)$. We hence obtain a new proof of a result of Dyckerhoff.

\begin{cor}[\cite{Dyckerhoff}]
If $W$ has exactly one isolated singularity, then the residue field $\mathbb{C}$ of the singularity is a generator of the category $\DSing(W^{-1}(0)) \cong [\MFdg(X,W)]$.
\end{cor}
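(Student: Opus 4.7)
The plan is to reduce the claim to the simple fact that the residue field generates the bounded derived category of a zero-dimensional local scheme, and then apply the theorem on Rouquier generators stated just above together with the equivalence $[\MFdg(X,W)] \cong \DSing(X_0)$ established in Section 2. The whole argument has three ingredients: (a) identify $\Sing(X_0)$ as an Artinian scheme supported at the critical point, (b) show $\mathbb{C}$ generates $\Db\Coh(\Sing(X_0))$, and (c) invoke the preceding theorem and the remark that follows it.

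First I would analyze $\Sing(X_0)$ scheme-theoretically. Since $X$ is smooth and $W\colon X\to \mathbb{A}^1$ is flat, $X_0=W^{-1}(0)$ is a Cartier divisor in $X$, and a point $x\in X_0$ is singular on $X_0$ exactly when $dW$ vanishes at $x$; since the unique critical value is $0$, this means $\Sing(X_0)$ coincides with the critical scheme $V(\partial W/\partial x_1,\dots,\partial W/\partial x_n)$ of $W$. The hypothesis that $W$ has one isolated singularity at a point $p$ says precisely that this critical scheme is zero-dimensional and supported at $p$, so $\Sing(X_0) = \Spec A$ for an Artinian local $\mathbb{C}$-algebra $(A,\mathfrak{m},\mathbb{C})$.

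Next I would show that the residue field $\mathbb{C}=A/\mathfrak{m}$ generates $\Db\Coh(\Sing(X_0))$ as a triangulated category. Since $A$ is Artinian, every finitely generated $A$-module has finite length and therefore a composition series whose simple subquotients are all isomorphic to $\mathbb{C}$. A straightforward induction on length shows that every object of $\Coh(\Sing(X_0))$ lies in the thick triangulated subcategory of $\Db\Coh(\Sing(X_0))$ generated by $\mathbb{C}$. Since $\Db\Coh(\Sing(X_0))$ is itself the thick closure of its heart, $\mathbb{C}$ is a generator.

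Finally I would apply the preceding theorem with $X_0$ in place of $X$. Taking $Z=i_*\mathbb{C}$ and any perfect generator $Y$ of $\Perf(X_0)$, we see that $i_*\mathbb{C}\oplus Y$ generates $\Db\Coh(X_0)$. Passing to $\DSing(X_0)=\Db\Coh(X_0)/\Perf(X_0)$, the object $Y$ becomes zero, so $i_*\mathbb{C}$ alone generates $\DSing(X_0)$; this is exactly the remark immediately preceding the corollary. Transporting via the equivalence $\DSing(X_0) \cong [\MFdg(X,W)]$ from Section 2 yields the claim. The only step requiring any actual content is the scheme-theoretic identification of $\Sing(X_0)$ as a zero-dimensional Artinian scheme; once this is in hand, the generation statement and the deduction from Rouquier's theorem are essentially formal.
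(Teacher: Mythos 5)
Your argument is correct and follows essentially the same route as the paper: the paper's entire proof is the observation that the structure sheaf generates $\Db\Coh(\Spec\mathbb{C})$, after which the preceding theorem of Rouquier and the remark following it do all the work. The only difference is that you spell out the d\'evissage needed when $\Sing(X_0)$ carries a non-reduced Artinian structure, a point the paper sidesteps by taking the reduced point $\Spec\mathbb{C}$; this is a harmless and arguably welcome elaboration, not a different method.
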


\begin{proof}
The structure sheaf is a generator of $\Db\Coh(\Spec \mathbb{C})$.
\end{proof}

We will also use the following theorem, which can be proven explicitly for the generators constructed inductively in \cite{Rou}, but in the hope that it might be useful in future work, we give a more general statement. The proof was outlined to us by Raphael Rouquier.

\begin{thm} 
\label{thm:generator}
If $E$ is a generator of $\Db\Coh(X)$ and $F$ is a generator of $\Db\Coh(Y)$ then $E\otimes F$ is a generator of $\Db\Coh(X\times Y))$.\footnote{Of course by $E \otimes F$ we mean the external tensor product $E \boxtimes F$ by abuse of notation.} \end{thm}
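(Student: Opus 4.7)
The plan is to proceed in two steps: first, a two-variable thick-subcategory argument reduces the claim to showing that every external tensor product $A\boxtimes B$ lies in the thick closure of $E\boxtimes F$; second, an ample-line-bundle argument shows that external tensor products on their own generate $\Db\Coh(X\times Y)$.

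For the reduction, let $\mathcal{T}\subseteq\Db\Coh(X\times Y)$ denote the thick subcategory generated by $E\boxtimes F$. Since the projections $\pi_1,\pi_2$ are flat morphisms, the functor $(-)\boxtimes F = \pi_1^{*}(-)\otimes^{L}\pi_2^{*}F$ is exact and preserves direct summands, so
\[
\mathcal{T}_X := \{\,A\in\Db\Coh(X) : A\boxtimes F\in\mathcal{T}\,\}
\]
is a thick subcategory of $\Db\Coh(X)$. It contains $E$, hence equals $\Db\Coh(X)$. Fixing any $A\in\Db\Coh(X)$, the analogous argument applied in the second variable shows that $\{\,B\in\Db\Coh(Y) : A\boxtimes B\in\mathcal{T}\,\}$ is a thick subcategory of $\Db\Coh(Y)$ containing $F$, and hence equal to $\Db\Coh(Y)$. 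Therefore $A\boxtimes B\in\mathcal{T}$ for every pair $(A,B)$.

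For the main step, pick very ample line bundles $L$ on $X$ and $M$ on $Y$; then $L\boxtimes M$ is very ample on the smooth quasi-projective variety $X\times Y$. By the Thomason--Trobaugh theorem, the twists $\{\,(L\boxtimes M)^{n}\,\}_{n\in\ZZ}$ classically generate $\Perf(X\times Y)$; smoothness of $X\times Y$ identifies $\Perf(X\times Y)$ with $\Db\Coh(X\times Y)$, and each $(L\boxtimes M)^{n}=L^{n}\boxtimes M^{n}$ is already an external tensor product of coherent sheaves. Together with the previous paragraph, this gives $\Db\Coh(X\times Y)=\mathcal{T}$. The main obstacle I anticipate is the rigorous justification of the Thomason--Trobaugh generation step in the quasi-projective (rather than projective) setting: one combines the finiteness of global homological dimension, coming from smoothness, with the fact that any coherent sheaf admits a surjection from a direct sum of negative twists of an ample line bundle, which itself is seen by compactifying and invoking Serre vanishing, then restricting the resulting resolutions back to $X\times Y$.
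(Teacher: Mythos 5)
Your first step---the two-variable thick-subcategory argument showing that every external product $A\boxtimes B$ lies in the thick closure $\mathcal{T}$ of $E\boxtimes F$---is correct and uses no smoothness. The gap is in the second step: you identify $\Db\Coh(X\times Y)$ with $\Perf(X\times Y)$ via smoothness of $X\times Y$, but the theorem carries no smoothness hypothesis on $X$ and $Y$, and in this paper it is applied precisely to singular varieties: the generators $E$ and $F$ live on $\Sing(W_1^{-1}(0))$ and $\Sing(W_2^{-1}(0))$, and more generally on the (typically singular) zero fibers of the superpotentials. When $X\times Y$ is singular, the twists $(L\boxtimes M)^{n}$ of an ample line bundle classically generate only $\Perf(X\times Y)$, which is then a \emph{proper} thick subcategory of $\Db\Coh(X\times Y)$, so your argument stops short of the full bounded derived category. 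The obstacle you flagged (justifying Thomason--Trobaugh generation in the quasi-projective setting) is not the real difficulty; the real difficulty is passing from $\Perf$ to $\Db\Coh$ on a singular product.

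The paper's proof is organized exactly around this point: it inducts on $\dim X+\dim Y$, uses the decomposition $\Sing(X\times Y)=(\Sing(X)\times Y)\cup(X\times\Sing(Y))$ (valid over $\mathbb{C}$; the remark following the theorem notes this fails over imperfect fields), shows that generators of $\Db\Coh$ on two closed pieces combine to a generator on their union via the triangle $\RGamma_S(F)\to F\to j_*j^*F$ together with Rouquier's lemmas, and then invokes Rouquier's result that a generator of $\Db\Coh(\Sing(Z))$ together with a generator of $\Perf(Z)$ generates $\Db\Coh(Z)$. Your second step is essentially a proof of the one ingredient the paper asserts without argument, namely that $E''\otimes F''$ generates $\Perf(X\times Y)$ when $E''$ and $F''$ generate $\Perf(X)$ and $\Perf(Y)$; combined with the paper's induction it would be a useful supplement, but on its own your argument proves the theorem only when $X$ and $Y$ are smooth.
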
 

\begin{proof} 
First we observe that if $X=S\cup T$ is the union of two closed subvarieties, and if $A$ generates $\Db\Coh(S)$ and $B$ generates $\Db\Coh(T)$, then $A\oplus B$ generates $\Db\Coh(X)$. We show that $\Db\Coh(S)$ and $\Db\Coh(T)$ together generate $\Db\Coh(X)$. Let $I_S$ be the sheaf of ideals of $S$ and let $I_T$ be the sheaf of ideals of $T$. Let $F$ be a coherent sheaf on $X$. Then we have the short exact sequence
\[ 0 \to I_S F \to F \to F/I_S F \to 0. \]
Since $I_S I_T = 0$, we see that $I_S F$ is a coherent sheaf on $T$ and $F/I_S F$ is a coherent sheaf on $S$. 
The claim follows.

Now to prove the theorem, we proceed by induction on $\dim X + \dim Y$. Let ${E}'$ be a generator of $\Db\Coh(\Sing(X))$ and ${F}'$ a generator of $\Db\Coh(\Sing(Y))$. 
By induction, we have that ${E}' \otimes {F}$ generates $\Db\Coh(\Sing(X)\times Y)$
and ${E}\otimes{F'}$ generates $\Db\Coh (X \times \Sing(Y))$. 
Let \[Z=(\Sing(X)\times Y) \cup (X\times \Sing(Y))\] which, because we are working over $\mathbb{C}$, is the same as $\Sing(X\times Y)$. 
Then $({E'}\otimes{F})\oplus({E}\otimes{F'})$ generates $\Db\Coh(Z)$. Let ${E}''$ be a generator of $\mathfrak{Perf}(X)$ and ${F}''$ a generator of $\mathfrak{Perf}(Y)$. Then $E''\otimes F''$ generates $\mathfrak{Perf}(X\times Y)$ by Lemma 3.4.1 and Theorem 2.1.2 of \cite{BonVDB}, and hence \[(E'' \otimes F'') \oplus (E'\otimes F) \oplus (E\otimes F')\] generates $\Db\Coh(X\times Y)$. Since each of the three summands is the external tensor product of sheaves, $E\otimes F$ generates $\Db\Coh (X\times Y)$ as desired.
 \end{proof}
 
\begin{rem} As a caution note that the hypothesis that the ground field be $\mathbb{C}$ is important here. The problem is illustrated by the fact that over an imperfect field $k$, it can happen that $X$ and $Y$ are regular but $X\times Y$ is not. Thus $\Db\Coh(X)$ and $\Db\Coh(Y)$ can have perfect generators whose external tensor product will fail to generate $\Db\Coh(X\times Y)$. As a consequence, the authors don't know of a clean statement for \prettyref{thm:functor} that
works over an arbitrary base field. 
\end{rem}

\begin{lem} 
\label{lem:duality}
We have a functor $D$ which takes a matrix factorization $P$ to the matrix factorization $\sheafHom_{\OO_X} (P,\OO_X)$ and which induces an equivalence between the categories $[\MFdg(X,W)]$ and $[\MFdg(X,-W)^\op]$. The functor
\[ \sheafRHom(-,\OO_{X_0}[1]) : \Db\Coh(X_0) \to \Db\Coh(X_0)^\op \]
induces a functor
\[ \DSing(X_0) \to \DSing(X_0)^\op \]
which we will also denote by $\sheafRHom(-,\OO_{X_0}[1])$. Then finally we have the following commutative diagram \[
\xymatrix { 
\Dabs \mf(X,W) \ar[d]^D \ar[r] & \DSing(X_0) \ar[d]^{\sheafRHom(-,\OO_{X_0}[1])} \\
\Dabs \mf(X, -W)^\op \ar[r] & \DSing(X_0)^\op.}\]
\end{lem}


For a dg category $T$, we recall the notation $\widehat{T} = \Int(T^{\op}\text{-}\Mod)$, the full dg subcategory of $T^{\op}\text{-}\Mod$ consisting of those $T^{\op}$-modules that are both fibrant and cofibrant (see \cite{Toen}).

\begin{proof}[Proof of \prettyref{thm:functor}] 
Let $E$ and $F$ be generators of $\Db\Coh(\Sing(W_1^{-1}(0)))$ and $\Db\Coh(\Sing(W_2^{-1}(0)))$ respectively. Let $P$ be a matrix factorization of $(X_1,W_1)$ such that we have a triangle \[P \to E \to C\] with $C$ acyclic, and similarly let $Q$ be a matrix factorization of $(X_2,-W_2)$ such that we have a triangle \[Q \to F \to C'\] with $C'$ acyclic --- we can do this by \prettyref{lem:coker}.
Let $A$ and $B^{\op}$ denote $\RHom(P,P)$ and $\RHom(Q,Q)$ respectively. Following the same argument as the proof of Theorem 4.2 of \cite{Dyckerhoff}, we know that 
$\Inj(X_1,W_1)\cong \widehat{A}$
and we know that 
$\Inj(X_2,-W_2)\cong \widehat{B^{\op}}$. We also have $\Inj(X_2, W_2) \cong \widehat{B}$.

The cone of $P \otimes Q \to E \otimes F$ is acyclic. By \prettyref{thm:generator}, $E \otimes F$ generates the category $\DSing(W^{-1}(0))$ where $W = \pi_1^\ast(W_1) - \pi_2^\ast(W_2)$, because we have 
\[\Sing(W^{-1}(0)) = \Sing(W_1^{-1}(0)) \times \Sing(W_2^{-1}(0)).\] (This equality is the main observation of this proof; all other parts of this proof are essentially standard considerations of Morita theory.) Therefore by \prettyref{lem:coker}, it follows that $P \otimes Q$ generates the matrix factorization category $\MFdg(X_1 \times X_2 , W)$.

Since $P$ and $Q$ are curved vector bundles, we have a canonical isomorphism 
\[\sheafHom(P,P) \otimes \sheafHom(Q,Q) \to \sheafHom(P \otimes Q , P \otimes Q).\]
We then have
\begin{align*}
\Hom_{\MFCech} ( P \otimes Q  , P \otimes Q ) & = \Gamma \Tot \mathcal{C}^\bullet( \mathfrak{U}, \sheafHom(P \otimes Q, P \otimes Q)) \\
&  \cong \Gamma \Tot \mathcal{C}^\bullet ( \mathfrak{U} , \sheafHom(P,P ) \otimes \sheafHom(Q,Q)) \\
&  \cong \Hom_{\MFCech} (P, P ) \otimes \Hom_{\MFCech} (Q,Q).
\end{align*}
Therefore we have \[\Inj (X_1 \times X_2, W) \cong \widehat{A \otimes B^{\op}}.\]
We conclude with the following string of isomorphisms \cite{Toen}:
\[
\Inj(X_1\times X_2,W) \cong \widehat{A \otimes B^{\op}}\cong \RHom_{\c}(\widehat{A},\widehat{B})\cong \RHom_{\c}(\Inj(X_1,W_1),\Inj(X_2,W_2)).
\]

In the case of $X_1 = X_2$, the claimed identification of the identity functor with $\Delta$ comes from the fact that $\RHom(P\otimes D(P),\Delta) \cong \RHom(P,P) = A$. The proof of this is the same as the proof of Proposition 6.3 of \cite{Dyckerhoff}.
\end{proof}

\begin{cor} By the above calculations and Corollary 1.24 of \cite{Orlov}, we conclude that when the critical locus of $W$ is proper, the category $\Inj(X,W)$ is dg affine, proper, and homologically smooth as a differential $\ZZ/2\ZZ$-graded category \cite{KKP}. \end{cor}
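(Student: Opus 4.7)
The plan is to verify the three conditions—dg affineness, homological smoothness, and properness—separately, each time reducing the question to a statement about a matrix factorization generator via the machinery already assembled in Section~3.

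\emph{dg affine.} This is essentially immediate from the proof of \prettyref{thm:functor}. Taking $X_1=X_2=X$ and $W_1=W_2=W$, and choosing a matrix factorization $P$ on $(X,W)$ which maps to a generator of $\DSing(X_0)$ with acyclic cone (as produced by \prettyref{lem:coker}), the argument there yields an equivalence $\Inj(X,W)\cong \widehat{A}$ with $A=\RHom(P,P)$. Thus $\Inj(X,W)$ is Morita equivalent to a single dg algebra and is therefore dg affine.

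\emph{Homological smoothness.} By To\"en's derived Morita theory, smoothness of $\Inj(X,W)$ is equivalent to compactness of the diagonal bimodule. Under the identification provided by \prettyref{thm:functor}, this bimodule corresponds precisely to the diagonal curved sheaf $\Delta=\mathcal{O}_\Delta \in \Inj(X\times X,\tilde W)$. I would then argue that $\Delta$ is compact exactly as in the proof of smoothness via \prettyref{lem:coker}: the sheaf $\mathcal{O}_\Delta$ is coherent and, since $\tilde W=\pi_1^\ast W-\pi_2^\ast W$ restricts to zero on the diagonal, it is supported on $\tilde W^{-1}(0)$. Applying \prettyref{lem:coker} to the Landau--Ginzburg model $(X\times X,\tilde W)$ produces a matrix factorization $P_\Delta$ together with a morphism $P_\Delta\to \mathcal{O}_\Delta$ whose cone is acyclic, so $\Delta \cong P_\Delta$ in $\Dabs\QC(X\times X,\tilde W)$. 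The compactness of objects of $[\MFdg(X\times X,\tilde W)]$ then gives compactness of $\Delta$.

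\emph{Properness.} A dg category with a compact generator is proper iff the endomorphism dg algebra of the generator is a perfect $\mathbb{C}$-complex, i.e.\ has finite-dimensional total cohomology. By the equivalence $[\MFdg(X,W)]\cong\DSing(X_0)$ established in Section~2, and the preceding identification $\Inj(X,W)\cong\widehat{A}$ with $A=\RHom(P,P)$, this is equivalent to showing that all morphism spaces in $\DSing(X_0)$ are finite-dimensional. This is precisely the content of Corollary~1.24 of \cite{Orlov} under the hypothesis that the singular locus of $X_0$ is proper; and since we assume $W$ has the single critical value $0$, the critical locus of $W$ coincides with the singular locus of $X_0$, so the hypothesis translates into the one in our statement.

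The only nontrivial step is the compactness of $\Delta$, since dg affineness is recorded by \prettyref{thm:functor} and properness is a direct appeal to Orlov. What makes the smoothness step go through cleanly in our setting is that \prettyref{lem:coker} applies verbatim to $(X\times X,\tilde W)$, because $X\times X$ is again smooth and quasi-projective over $\mathbb{C}$ and $\mathcal{O}_\Delta$ is coherent on $\tilde W^{-1}(0)$—so no additional input is needed beyond what Section~2 already provides.
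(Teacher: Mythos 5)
Your proposal is correct and follows exactly the route the paper intends (the paper leaves the proof implicit, citing only ``the above calculations'' and Orlov's Corollary 1.24): dg affineness from the equivalence $\Inj(X,W)\cong\widehat{A}$ in \prettyref{thm:functor}, smoothness from compactness of the diagonal $\Delta$ obtained by applying \prettyref{lem:coker} to $\mathcal{O}_\Delta$ on $(X\times X,\tilde W)$, and properness from Orlov's finite-dimensionality of morphism spaces in $\DSing(X_0)$ when the singular locus is proper. Your filling-in of these details, including the observation that the critical locus of $W$ coincides with $\Sing(X_0)$ because $0$ is the only critical value, is accurate.
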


\begin{proof}
The identity functor corresponds to the diagonal curved sheaf, which is compact. Therefore the identity functor is a compact object in the endofunctor category $\RHom_{\c}(\Inj(X,W),\Inj(X,W))$, so $\Inj(X,W)$ is homologically smooth.
\end{proof}

\begin{lem} With the same assumptions as the previous corollary, we have the following result $$\RHom(\Inj(X_1,W_1)_\c,\Inj(X_2,W_2)_\c)\cong \Inj(X_1\times X_2,\pi_1^*(W_1)-\pi_2^*(W_2))_\c.$$  \end{lem}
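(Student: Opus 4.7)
The plan is to reduce, via the compact generators constructed in Theorem 3.4, to a standard statement in Toën's derived Morita theory for smooth and proper dg algebras. By the previous corollary, both $\Inj(X_i,W_i)$ are smooth, proper, and dg affine. Let $P_i$ be the matrix factorization generator used in the proof of Theorem 3.4, and set $A_i = \RHom(P_i,P_i)$. Then $\Inj(X_i,W_i) \cong \widehat{A_i}$, and the tensor-product computation in that proof upgrades to $\Inj(X_1 \times X_2, W) \cong \widehat{A_1 \otimes A_2^{\op}}$. Compactness in a dg affine category corresponds to perfectness over the associated dg algebra, so the lemma reduces to the identification
\[ \RHom((\widehat{A_1})_\c, (\widehat{A_2})_\c) \cong (\widehat{A_1 \otimes A_2^{\op}})_\c . \]

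Theorem 3.4 already identifies arbitrary objects of $\widehat{A_1 \otimes A_2^{\op}}$ with continuous functors $\widehat{A_1} \to \widehat{A_2}$. What remains is to show that, under this identification, the \emph{perfect} (i.e., compact) bimodules correspond precisely to those continuous functors which preserve compact objects, and that these in turn are equivalent, via ind-completion, to ordinary dg functors between the compact subcategories.

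The main obstacle is the identification ``compact-preserving continuous functor $\Leftrightarrow$ perfect bimodule''. One direction is formal: tensoring with a perfect bimodule preserves perfects, while any dg functor between compact subcategories ind-extends uniquely to a compact-preserving continuous functor whose restriction recovers the original. The harder direction is that every compact-preserving continuous functor arises from a perfect bimodule; this is where smoothness of $A_1$ and properness of $A_2$, inherited from the previous corollary, are essential, and it is precisely the point in the argument that forces the assumption on the critical locus. Both implications are standard within the framework of \cite{Toen}. Combining them with Theorem 3.4 yields the desired chain
\[ \RHom(\Inj(X_1,W_1)_\c, \Inj(X_2,W_2)_\c) \cong (\widehat{A_1\otimes A_2^{\op}})_\c \cong \Inj(X_1\times X_2,W)_\c . \]
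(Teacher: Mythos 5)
Your reduction is the same as the paper's: pass to the smooth proper dg algebras $A_1$, $A_2$ via the generators from Theorem 3.4, identify $\Inj(X_1\times X_2,W)$ with $\widehat{A_1\otimes A_2^{\op}}$, and reduce the lemma to showing that the perfect $A_1\otimes A_2^{\op}$-modules are exactly the bimodules whose associated continuous functors preserve compacts. You also correctly locate the one nontrivial direction and the fact that smoothness and properness must enter there. But at that exact point the proposal stops: you assert that the hard implication is ``standard within the framework of \cite{Toen}'' without giving the argument, and that attribution is not accurate --- To\"en's Morita theory identifies continuous functors with arbitrary bimodules, but it does not by itself tell you that a compact-preserving continuous functor comes from a \emph{perfect} bimodule. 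That step is precisely the content the lemma needs, and it is where the paper does its only real work.

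The missing argument is short but essential, and it runs as follows. Let $M$ be an $A_1\otimes A_2^{\op}$-module whose induced functor $P\mapsto P\otimes_{A_1} M$ preserves perfect modules. Testing on $P=A_1$ itself shows that $M$ is perfect as an $A_2$-module; since $A_2$ is proper, $\dim_k H^\bullet(M)$ is finite. Now invoke the criterion (Proposition 3.4 of \cite{Shklyarov}, stated as the next lemma in the paper) that a module over a \emph{smooth and proper} dg algebra is perfect if and only if its total cohomology is finite-dimensional: since $A_1\otimes A_2^{\op}$ is smooth and proper, $M$ is perfect as a bimodule. Without this finite-dimensionality criterion --- or some substitute for it --- your ``harder direction'' remains unproved, so as written the proposal has a genuine gap at its central step.
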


\begin{proof} Both $\Inj(X_1,W_1)$ and $\Inj(X_2,W_2)$ are equivalent to $\widehat{A}$ and $\widehat{B}$, where $A$ and $B$ are smooth and proper dg algebras. What we need to know is that if  $M$ is an $A\otimes B^{\op}$-module such that for any perfect $A$-module $P$, in particular $A$ itself, $P\otimes M$ is perfect as a $B$-module, then $M$ is perfect. This follows immediately from the following well-known lemma, see e.g. Proposition 3.4 of \cite{Shklyarov}.
\end{proof}

\begin{lem} A module $N$ over a smooth and proper dg algebra over $k$ is perfect if and only if $ \dim_k H^\bullet (N)$ is finite. \end{lem}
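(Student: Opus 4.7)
The plan is to prove the two implications separately. The easy direction --- perfection implies finite-dimensional cohomology --- follows because any perfect $A$-module $N$ lies in the thick subcategory generated by $A$ itself inside the derived category of $A$-modules. Since $A$ is proper, $H^\bullet(A)$ is finite dimensional over $k$, and the property of having finite-dimensional total cohomology is preserved under shifts, mapping cones, and retracts. Hence $H^\bullet(N)$ is finite dimensional.

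For the converse, I would exploit smoothness. Smoothness of $A$ means that the diagonal bimodule $A$ is perfect as an object of the derived category of $A \otimes A^{\op}$-modules. Therefore there is a bounded complex $P^\bullet$ built by finitely many shifts, cones, and retracts from the free bimodule $A \otimes A^{\op}$ together with a quasi-isomorphism $P^\bullet \to A$ of bimodules. Tensoring over $A$ on the right with $N$ yields a quasi-isomorphism $P^\bullet \otimes_A^L N \simeq A \otimes_A^L N \simeq N$.

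Each building block $(A \otimes A^{\op}) \otimes_A N$ is canonically isomorphic to $A \otimes_k N$, where $N$ is viewed as a complex of $k$-vector spaces. Since $k$ is a field and $\dim_k H^\bullet(N) < \infty$, the complex $N$ is quasi-isomorphic to its cohomology $H^\bullet(N)$ equipped with zero differential; in particular $N$ is perfect as a $k$-module. Consequently $A \otimes_k N$ is quasi-isomorphic to a finite direct sum of shifts of $A$, hence perfect as a left $A$-module. Transporting the shift-cone-retract construction for $P^\bullet$ through the tensor product with $N$, we conclude that $N$ itself is obtained from perfect $A$-modules by finitely many shifts, cones, and retracts, so $N$ is perfect.

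The real content is the combination of two facts: smoothness supplies a perfect bimodule resolution of the diagonal, which reduces an arbitrary $A$-module with finite total cohomology to tensor products of $A$ with $k$-complexes; and over a field, finite total cohomology is tantamount to perfection. The main bookkeeping concern is carrying shifts and retracts through the derived tensor product, but since both the tensor product and the class of perfect modules are compatibly closed under these operations, this is routine rather than a genuine obstacle.
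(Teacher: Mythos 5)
Your proposal is correct. The paper gives no proof of this lemma, simply citing Proposition 3.4 of \cite{Shklyarov} as well known, and your argument is exactly the standard one behind that reference: properness of $A$ handles the forward implication since shifts, cones and retracts preserve finite total dimension of cohomology, while for the converse, smoothness exhibits $A$ in the thick subcategory of $\D(A\otimes A^{\op})$ generated by $A\otimes A^{\op}$, and applying the triangulated functor $-\otimes_A^{\mathbb{L}}N$ places $N$ in the thick subcategory generated by $A\otimes_k N\simeq A\otimes_k H^\bullet(N)$, a finite sum of shifts of $A$ (the only cosmetic point is that one should phrase the perfect bimodule resolution as a retract in the derived category rather than a literal quasi-isomorphism from a finite cell complex, which changes nothing since the tensor functor is triangulated and commutes with retracts).
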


\section{Calabi--Yau property}

The goal of this section is to prove the following theorem: 
\begin{thm} Let $(X,W)$ be as above and, in addition, suppose $X$ is Calabi--Yau. Then the category $\Inj(X,W)_\c$ is a Calabi--Yau category of dimension $n$, where $n$ is the dimension of $X$.\end{thm}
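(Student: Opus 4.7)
The plan is to transport the classical argument that a Calabi--Yau variety has Serre functor equal to a shift into the curved setting. By \prettyref{thm:functor}, continuous endofunctors of $\Inj(X,W)$ are identified with $\Inj(X\times X, \widetilde W)$, and the identity functor corresponds to the diagonal curved sheaf $\Delta$. Under this identification, the Calabi--Yau property of dimension $n$ for the smooth dg category $\Inj(X,W)_\c$ translates into a statement about $\Delta$: namely, the ``dual'' bimodule of $\Delta$ (in the appropriate bimodule sense) should be quasi-isomorphic to $\Delta[n]$ as an object of $\Dabs\QC(X\times X, \widetilde W)$.

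First I would compute the dual of $\Delta$ via Grothendieck--Serre duality on the ambient scheme $X\times X$. The essential ingredients are the sheaf-level adjunction $\sheafRHom_{\OO_{X\times X}}(\Delta_\ast \OO_X, G) \cong \Delta_\ast(\Delta^! G)$ for the codimension-$n$ closed immersion $\Delta$, together with the standard formula $\Delta^! \omega_{X\times X} \cong \omega_X[-n]$ (which follows from the identification $\det N_{X/X\times X} \cong \omega_X^{-1}$). Using Serre duality on the smooth proper $X\times X$ to identify the $\mathbb{C}$-linear dual of the diagonal bimodule with $\sheafRHom(\Delta_\ast \OO_X, \omega_{X\times X})[2n]$, these ingredients combine to give
\[ \Delta^{\vee} \;\cong\; \Delta_\ast(\omega_X)[n] \]
as an object of $\Dabs\QC(X\times X, \widetilde W)$. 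This is the curved analogue of the familiar formula $\Delta_\ast \omega_Y[\dim Y]$ for the Serre kernel of $\Db\Coh(Y)$. The Calabi--Yau hypothesis $\omega_X \cong \OO_X$ then immediately upgrades this to $\Delta^{\vee} \cong \Delta_\ast(\OO_X)[n] = \Delta[n]$, proving the theorem.

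The main obstacle is carrying out the above Grothendieck--Serre duality intrinsically in the curved setting, rather than formally transferring it from $\Db\Coh(X\times X)$. The subtlety is that $\omega_{X\times X}$ is not itself naturally a curved complex for the superpotential $\widetilde W$; the crucial observation that makes everything work is that $\widetilde W$ vanishes along the diagonal, so any sheaf of the form $\Delta_\ast F$ is canonically a curved complex with zero curvature and can therefore be compared with objects of $\Inj(X\times X, \widetilde W)$. I would make this rigorous using the \v Cech model $\MFCech$ from \prettyref{prop:injective}, which computes $\RHom$ via quasi-coherent data on $X$ and hence admits classical sheaf-theoretic operations, and/or by working with the concrete dg algebra of endomorphisms of the generator $P \otimes D(P)$ constructed in the proof of \prettyref{thm:functor} (with $D$ the duality from \prettyref{lem:duality}), where the bimodule duality can be verified explicitly. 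This closely parallels Dyckerhoff's local proof, with the Calabi--Yau hypothesis on $X$ entering at precisely one place: the replacement of $\omega_X$ by $\OO_X$ in the formula for the Serre bimodule.
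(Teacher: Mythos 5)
Your overall strategy---identify continuous endofunctors with $\Inj(X\times X,\tilde W)$, reduce the Calabi--Yau property to a duality statement about the diagonal $\Delta$, and compute that dual via Grothendieck duality for the regular immersion $\Delta\colon X\to X\times X$---is the same as the paper's, and your conclusion $\Delta^\vee\cong\Delta_*\omega_X[n]$ matches the paper's answer (all shifts being read mod $2$). However, the step you yourself flag as ``the main obstacle'' is precisely the mathematical content of the proof, and your sketch does not close it; moreover one of your inputs is not available. You invoke ``Serre duality on the smooth proper $X\times X$'' to identify the $\mathbb{C}$-linear dual of the diagonal bimodule with $\sheafRHom(\Delta_*\OO_X,\omega_{X\times X})[2n]$. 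But $X$ is only quasi-projective, so $X\times X$ is not proper and the theorem assumes no properness. The paper instead works with the \emph{inverse} Serre bimodule $A^!=\RHom_{(A^\e)^{\op}}(A,A^\e)$, which requires only smoothness, and accordingly dualizes into $\OO_{X\times X}$ rather than $\omega_{X\times X}$ (for Calabi--Yau $X$ the two coincide, but the choice determines which finiteness hypotheses you need).

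The second, more serious gap: the reason the ambient-scheme duality computes anything internal to the curved category is not that $\tilde W$ vanishes on the diagonal (true but insufficient), but that the duality $D(P)=\sheafHom(P,\OO_{X\times X})$ on matrix factorizations corresponds under $\coker$ to $\sheafRHom(-,\OO_S[1])$ on $\DSing(S)$, where $S=\tilde W^{-1}(0)$; this is exactly \prettyref{lem:duality}. The paper therefore factors $\Delta$ as $X\to S\to X\times X$ and uses $(j\circ i)^\flat\cong i^\flat\circ j^\flat$ together with $j^\flat(\OO_{X\times X})=\OO_S[1]$ to convert the ambient formula $\sheafRHom_{X\times X}(\OO_\Delta,\OO_{X\times X})\cong\OO_\Delta\otimes\omega_X^\vee[n]$ into the statement $\sheafRHom_S(\OO_\Delta,\OO_S[1])\cong\OO_\Delta[n]$, i.e.\ $D(\Delta)\cong\Delta[n]$ inside the singularity category. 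The extra shift by $[1]$ produced by the intermediate hypersurface $S$ is what reconciles the codimension-$n$ duality on $X\times X$ with the duality native to $\MFdg(X\times X,\tilde W)$; your outline has no mechanism for producing it, and without it the ambient computation cannot be identified with the bimodule dual. Finally, to pass from $D(\Delta)\cong\Delta[n]$ to $A^!\cong A[n]$ one still needs Dyckerhoff's adjunction $\RHom(D(P)\otimes P,Z)\cong\RHom(D(Z),P\otimes D(P))$ applied to $Z=\Delta[n]$ and the compact generator $P\otimes D(P)$ constructed in the proof of \prettyref{thm:functor}; this is where the bimodule $A^\e$ actually enters, and it is not automatic from the kernel formalism alone.
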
 

\begin{proof}
As above, let $\tilde{W}$ be the function $\pi_1^*(W)-\pi_2^*(W)$ on $X\times X$. Denote $\tilde{W}^{-1}(0)$ by $S$. In the previous section we have proved that $\Inj(X,W) \cong \widehat{A}$, where $A=\RHom(P,P)$ and $P$ is a compact generator. Let $A^\e=A\otimes A^{\op}$ and recall that the inverse Serre bimodule is defined as $$A^! = \RHom_{(A^\e)^{\op}}(A,A^\e).$$ Thus to prove the Calabi--Yau property it suffices to prove that we have $A^! \cong A[n]$ in the category $[\Int(A^\e\text{-}\Mod)]$.

We need to recall some theory from \cite{RD}. First we recall that given a closed immersion $i: X \to Y$ there is a functor $$i^\flat  = \sheafRHom_Y(i_*\OO_X,-) : \Db\Coh(Y)) \to \Db\Coh(X)).$$
It is easy to check that this functor has the property that given two morphisms $i$ and $j$, we have $(j\circ i)^\flat\cong i^\flat \circ j^\flat$.
Now we can factor the diagonal morphism $\Delta: X\to X\times X$ as the composition of $i: X\to S $ and $j:S \to X\times X$, so by the Fundamental Lemma on page 179 of \cite{RD}, $$\Delta^\flat(\OO_{X\times X})= \sheafRHom_{X\times X}(\OO_\Delta,\OO_{X\times X})=(\OO_\Delta)\otimes\omega _{X/\mathbb{C}}^{\vee}[n],$$ where $\omega_{X / \mathbb{C}}$ is the canonical sheaf. The right hand side is $\OO_\Delta[n]$ when $X$ is Calabi--Yau. A simple calculation shows that $j^\flat(\OO_{X\times X})= \OO_S[1]$. Thus we conclude that $$\RHom_ S(\OO_\Delta,\OO_S[1])= \OO_\Delta[n].$$

From here this argument follows exactly the argument of Lemma 6.8 of \cite{Dyckerhoff}. We repeat it here to show how to adapt it to our situation. Consider $D(P)\otimes P$, which is a generator for the category $MF(X\times X, \tilde{W})$. 
For any $Z$, we have 
\[\RHom(D(P)\otimes P , Z) \cong \RHom(D(Z), P \otimes D(P)).\]

Now we let $Z$ be the diagonal shifted by (the parity of) the dimension of $X$.
By the discussion above and \prettyref{lem:duality}, $D(Z)$ corresponds to the diagonal $\Delta$.  Finally, we conclude with the following sequence of isomorphisms:
\begin{align*}
A[n] & \cong \RHom(D(P) \otimes P , Z) \quad \text{ ($\Delta$ corresponds to the identity)}  \\
 & \cong \RHom(D(Z), P \otimes D(P)) \quad \text{ ($D$ is a contravariant equivalence)}  \\
 & \cong \RHom_{(A^\e)^{\op}}(\RHom(P\otimes D(P),D(Z)),A^\e) \quad \text{ ($P \otimes D(P)$ is a generator)} \\
 & \cong \RHom_{(A^\e)^{\op}}(A,A^\e) \quad \text{ ($\Delta$ corresponds to the identity)} \\
 & = A^{!}.
\end{align*}
This completes the proof of the theorem.
\end{proof}

\section*{Acknowledgements}

We thank our advisor Constantin Teleman for his guidance and support. We would also like to thank Leonid Positselski for his interest in our work and for many useful comments, and Raphael Rouquier for helpful correspondence.

\end{document}